\newtheorem{theorem}{Theorem}[section]
\newtheorem{lemma}[theorem]{Lemma}
\newtheorem{corollary}[theorem]{Corollary}
\newtheorem*{remark}{Remark}
\newtheorem*{remarks}{Remarks}
\numberwithin{equation}{section}
\definecolor{green}{HTML}{4E9A25}
\newcommand{\sm}{\Big(\begin{smallmatrix}}
\newcommand{\esm}{\end{smallmatrix}\Big)}
\newcommand{\mat}{\Big(\begin{matrix}}
\newcommand{\emat}{\end{matrix}\Big)}
\renewcommand{\epsilon}{\varepsilon}
\def\re{\mathrm{Re}}
\def\im{\mathrm{Im}}
 \def\R{\mathbb R} \def\Z{\mathbb Z} \def\C{\mathbb C}
\newcommand{\beq}{\begin{eqnarray*}}
\newcommand{\eeq}{\end{eqnarray*}}
\newcommand{\beqn}{\begin{eqnarray}}
\newcommand{\eeqn}{\end{eqnarray}}
\newcommand{\ben}{\begin{enumerate}}
\newcommand{\een}{\end{enumerate}}
\def\={\;=\;}  \def\+{\,+\,}  
\begin{document}

\title{Jensen polynomials for the Riemann xi-function}

\author{M. J. Griffin}

\address{Department of Mathematics, 275 TMCB,  Brigham Young University, 
Provo, UT 84602}
\email{mjgriffin@math.byu.edu}

\author{K. Ono}

\address{Department of Mathematics, University of Virginia, Charlottesville, VA 22904}
\email{ko5wk@virginia.edu}

\author{L. Rolen}
\address{Department of Mathematics, Vanderbilt University, Nashville, TN 37240}
\email{larry.rolen@vanderbilt.edu}

\author{J. Thorner}
\address{Department of Mathematics, University of Illinois, Urbana, IL 61801}
\email{jesse.thorner@gmail.com}

\author{Z. Tripp}
\address{Department of Mathematics, Vanderbilt University, Nashville, TN 37240}
\email{zachary.d.tripp@vanderbilt.edu}

\author{I. Wagner}
\address{Department of Mathematics, Vanderbilt University, Nashville, TN 37240}
\email{ian.c.wagner@vanderbilt.edu}

\begin{abstract}  
We investigate $\xi(s)=\frac{1}{2}s(s-1)\pi^{-\frac{s}{2}}\Gamma(\frac{s}{2})\zeta(s)$, where $\zeta(s)$ is the Riemann zeta function.  The Riemann hypothesis (RH) asserts that if $\xi(s)=0$, then $\re(s)=\frac{1}{2}$.  P\'olya  proved that RH is equivalent to the hyperbolicity of the Jensen polynomials $J^{d,n}(X)$ constructed from certain Taylor coefficients of  $\xi(s)$.  For each $d\geq 1$, recent work proves that $J^{d,n}(X)$ is hyperbolic for sufficiently large $n$.  In this paper, we make this result effective.  Moreover, we show how the low-lying zeros of the derivatives $\xi^{(n)}(s)$ influence the hyperbolicity of $J^{d,n}(X)$.
\end{abstract}

\keywords{Riemann zeta function, Riemann hypothesis, Jensen polynomial}
\thanks{2010
 Mathematics Subject Classification: 11M26, 11M06}

\maketitle

\vspace{-1mm}
\section{Introduction and Statement of Results} \label{section1}

Let $\zeta(s)$ be the Riemann zeta function.  Define $\xi(s):=\frac{1}{2}s(s-1)\pi^{-\frac{s}{2}}\Gamma(\frac{s}{2})\zeta(s)$ and\footnote{This presentation, which is convenient for us, differs from the traditional $\sum_{j=0}^{\infty}\gamma(j)z^{2j}/(2j)!$.}
\begin{equation}
\label{psi}
	\psi(z):=\sum_{j=0}^{\infty} \frac{\gamma(j)}{j!}z^{2j}=\xi\Big(\frac{1}{2}+z\Big).
\end{equation}
It is known that $\gamma(n)> 0$ for all $n\geq 0$ \cite[Section 4.4]{Broughan}.  For $d,n\geq 0$, the degree $d$ Jensen polynomial $J^{d,n}(X)$ for the $n$-th derivative $\xi^{(n)}(s)$ is
\begin{equation}
\label{eqn:Jensen_poly}
	J^{d,n}(X):=\sum_{j=0}^{d}{d\choose j}\gamma(n+j)X^j.
\end{equation}

A polynomial with real coefficients is {\it hyperbolic} if all of its zeros are real.   Expanding on notes of Jensen, P\'olya \cite{Polya} proved that the Riemann hypothesis (RH) is equivalent to the hyperbolicity of $J^{d,n}(X)$ for all $d,n\geq 0$.  Since RH remains unproved, some research has focused on proving hyperbolicity  for all $n\geq 0$ when $d$ is small.  Csordas, Norfolk, and Varga \cite{CNV} and Dimitrov and Lucas \cite{DL} proved hyperbolicity for $n\geq 0$ and $d\leq 3$. Building on the work of Borcea and Brändén \cite{BB} and Obreschkoff \cite{Obreschkoff}, Chasse \cite{Chasse} proved hyperbolicity for $d \le 2 \times 10^{17}$  and $n\ge 0$.

Recent work \cite{GORZ} provides a complementary treatment. For all $d\geq 1$, there is a threshold $N(d)$ such that $J^{d,n}(X)$ is hyperbolic for $n\geq N(d)$.  Specifically, under the transformation \eqref{Jtilde} below, the polynomials $J^{d,n}(X)$ are closely modeled by the Hermite polynomials $H_d(\frac{X}{2})$, where
\begin{equation}
\label{eqn:Hermite}
	\sum_{d=0}^\infty H_d(X)\frac{t^d}{d!} := e^{2Xt-t^2} = 1+2X t+(4X^2-2)\frac{t^2}{2!}+(8X^3-12X)\frac{t^3}{3!}+\cdots
\end{equation}
Thus for large $n$, $J^{d,n}(X)$ inherits hyperbolicity from $H_d(\frac{X}{2})$.  See Bombieri \cite{Bombieri} for commentary.
  
Our main result, which builds on work in \cite{GORZ}, provides an effective upper bound for $N(d)$.

\begin{theorem}\label{Thm1}
There is a constant $c>0$ such that $J^{d,n}(X)$ is hyperbolic for $d\geq 1$ and $n\geq ce^{d/2}$.
\end{theorem}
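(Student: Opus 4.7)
The plan is to make effective the asymptotic argument of \cite{GORZ}. That work introduces a renormalization \eqref{Jtilde} of $J^{d,n}(X)$, of the schematic form
\[
\widehat{J}^{d,n}(X)\;=\;\alpha(n)^{-1}\,J^{d,n}\!\left(\delta(n)X-1\right),
\]
with the normalizing factors $\delta(n),\alpha(n)$ defined through the consecutive ratios $\gamma(n+1)/\gamma(n)$ and $\gamma(n+2)/\gamma(n)$, designed so that $\widehat{J}^{d,n}(X)\to H_d(X/2)$ as $n\to\infty$ for each fixed $d$. The task is to quantify this limit and compare the error to the minimum gap between the real zeros of $H_d(X/2)$.

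First, I would derive an explicit Taylor-type expansion of $\log\gamma(n+j)$ in the variable $j$, centered at $j=0$, of the form
\[
\log\gamma(n+j) \;=\; \log\gamma(n) \;+\; L(n)\,j \;+\; \tfrac{1}{2}Q(n)\,j^2 \;+\; E_j(n),
\]
valid for $0\le j\le d$, with $E_j(n)$ controlled by bounds on higher finite differences of $\log\gamma$. These differences can be estimated from the asymptotic behavior of $\gamma(n)$ coming from the integral representation of $\xi$. Inserting the resulting ratios $\gamma(n+j)/\gamma(n)$ into $J^{d,n}(\delta(n)X-1)$ and expanding using the binomial theorem, the sum $\sum_{j=0}^d \binom{d}{j}e^{L(n)j+Q(n)j^2/2}(\delta(n)X-1)^j$ should — after a careful choice of $\delta(n)$ — reproduce exactly the generating-function identity \eqref{eqn:Hermite} with $t$ of order $1$, so that the leading term matches $H_d(X/2)$. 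The contributions of $E_j(n)$ then yield an explicit error estimate, which I expect to take the form
\[
\bigl|\widehat{J}^{d,n}(X)-H_d(X/2)\bigr| \;\ll\; \frac{B^{d}}{n}\cdot P(|X|),
\]
on a compact neighborhood of the real zeros of $H_d$, for some absolute constant $B>1$ and a polynomial factor $P$.

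Second, I would invoke the quantitative stability of hyperbolicity. The zeros of $H_d(X/2)$ are simple, real, and their minimum spacing is bounded below by $c/\sqrt{d}$ for an absolute $c>0$. A standard perturbation argument (e.g.\ locating zeros in disks around those of $H_d(X/2)$ using Rouché's theorem, combined with a uniform lower bound for $|H_d'|$ at each zero) shows that if the sup-norm error on a slightly enlarged neighborhood is smaller than a quantity of the form $c'/d^{A}$ for fixed $A$, then $\widehat{J}^{d,n}$ — and hence $J^{d,n}$ — is hyperbolic. Balancing $B^{d}/n$ against the required tolerance $d^{-A}$ gives a threshold of the form $n\ge c\, e^{(\log B)\,d}\cdot d^{A}$, which is absorbed into $n\ge c\, e^{d/2}$ upon optimizing $B$.

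The main technical obstacle is the first step: producing the effective Taylor remainder bounds for $\log\gamma(n+j)$ uniformly in $0\le j\le d$ with an error whose $d$-dependence is no worse than $B^{d}$ for the smallest possible $B$. Specifically, the exponential constant in the final bound $n\ge ce^{d/2}$ is entirely determined by how sharply one can control these higher-order differences; everything else in the argument — the comparison with Hermite polynomials and the hyperbolicity-stability lemma — is relatively standard. Identifying and exploiting cancellation between the terms $L(n)j$, $Q(n)j^2/2$, and the binomial weights $\binom{d}{j}$ is what drives the exponent $\tfrac12$ rather than a larger constant.
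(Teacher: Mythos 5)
Your proposal captures the high-level strategy (make the GORZ asymptotic $\widehat{J}^{d,n}\to H_d(X/2)$ effective), but the mechanism you use to pass from an approximation of $H_d$ to hyperbolicity is genuinely different from the paper's, and as you have sketched it there is a real weak link.

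The paper does \emph{not} argue by sup-norm perturbation and Rouch\'e. Instead it (i) replaces the GORZ uniformizer $\delta(n)$ by the more accurate $\Delta(M)$ from \eqref{delta} and replaces $\widehat{J}^{d,n}$ by a new normalization $\widetilde{J}^{d,n}$ in \eqref{Jhat}, engineered so that the top three coefficients of $\widetilde{J}^{d,n}$ agree \emph{exactly} with those of $H_d(X/2)$ (Lemma~\ref{3term}); and (ii) expands $\widetilde{J}^{d,n}(X)=\sum_j c_{d,n,j}H_{d-j}(X/2)$ in the Hermite basis and applies Tur\'an's theorem (\cite[Theorem~III]{Turan}), which gives a purely coefficient-level sufficient condition \eqref{eqn:turan_bound} for a Hermite combination to have only real, simple roots. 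The work then reduces to proving decay of the $c_{d,n,j}$, which comes from the uniform bound $|G_m(M)|\ll_C(2C)^m$ in Theorem~\ref{AsymNewThm} and the combinatorial cancellation in Theorem~\ref{Precision} (the differencing operator $\sigma_{k,x}$ annihilates $Z_{n+d}$ once $\ell\ge 4$). None of these ingredients appear in your sketch.

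Your alternative---estimate $\|\widehat{J}^{d,n}-H_d(X/2)\|_\infty$ on a neighborhood of the zeros and invoke quantitative stability---faces a concrete difficulty that you underplay. The claim ``if the sup-norm error is smaller than $c'/d^A$ for fixed $A$ then hyperbolicity is preserved'' is not correct for the monic polynomial $H_d(X/2)$: the relevant tolerance is not polynomial in $d$. The zeros of $H_d(X/2)$ spread over an interval of length $\asymp\sqrt{d}$, and the local extrema of $H_d(X/2)$ between consecutive zeros (equivalently, lower bounds for $|H_d'(X/2)|$ at the zeros) grow at a factorial/exponential rate in $d$ and vary dramatically across the interval, so a single polynomial tolerance has no chance of being the right threshold. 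At the same time your error bound $B^d/n\cdot P(|X|)$ must be evaluated at $|X|\asymp\sqrt{d}$, introducing further growth. These two $d$-dependent scales would have to be tracked simultaneously and matched precisely to recover a threshold as small as $e^{d/2}$; there is no reason to expect this bookkeeping to work out, and in any case it is far more delicate than Tur\'an's inequality, which bypasses all pointwise estimates on Hermite polynomials by working directly with coefficients. The essential missing ideas are the exact three-term matching via $\widetilde{J}^{d,n}$ and $\Delta(M)$, and the substitution of Tur\'an's criterion for the Rouch\'e-type perturbation argument.
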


For an integer $m\geq 0$, let $\mathrm{RH}_m$ to be the statement that if $\xi^{(m)}(s)=0$, then $\re(s)=\frac{1}{2}$.  It is well known that $\mathrm{RH}=\mathrm{RH}_0$ implies $\mathrm{RH}_m$ for all $m\geq 1$ \cite{Polya}.  The ideas of P\'olya lead to the conclusion that $\xi^{(m)}(s)$ satisfies $\mathrm{RH}_m$ if and only if $J^{d,n}(X)$ is hyperbolic for $d\geq 1$ and $n\geq m$.  For  $T\geq 0$, we define $\mathrm{RH}_m(T)$ to be the statement that all zeros $\rho^{(m)}$ of $\xi^{(m)}(s)$ with $|\im(\rho^{(m)})|\leq T$ satisfy $\re(\rho^{(m)})=\frac{1}{2}$.  Our second result is a relationship between $\mathrm{RH}_m(T)$ and the hyperbolicity of $J^{d,n}(X)$ for $n\geq m$.  In what follows, $\lfloor x\rfloor$ denotes the usual floor function.

\begin{theorem}\label{Thm2}
If $\mathrm{RH}_m(T)$ is true and $d\leq \lfloor T\rfloor ^2$, then
$J^{d,n}(X)$ is hyperbolic for all $n\geq m$.
\end{theorem}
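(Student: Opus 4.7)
The strategy is to make quantitative the P\'olya-style equivalence ``$\mathrm{RH}_m\Leftrightarrow J^{d,n}(X)$ hyperbolic for $d\ge 1$ and $n\ge m$'', given the partial input $\mathrm{RH}_m(T)$. The first step is to argue that $\mathrm{RH}_m(T)$ propagates upward: $\mathrm{RH}_m(T)$ implies $\mathrm{RH}_n(T)$ for every $n\ge m$, by a Rolle-type level-line argument for real entire functions of order $1$ (an off-critical-line zero of $\xi^{(n+1)}$ at height $\le T$ would force an off-line zero of $\xi^{(n)}$ at comparable height, contradicting the hypothesis). This reduces the problem to showing that $\mathrm{RH}_n(T)$ alone implies hyperbolicity of $J^{d,n}(X)$ for $d\le\lfloor T\rfloor^{2}$.

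Fix $n\ge m$. The core ingredients are two. On one side, P\'olya's Fourier representation $\xi(\tfrac12+it)=\int_{-\infty}^{\infty}K(u)e^{itu}\,du$ for an explicit positive even Schwartz kernel $K(u)$, which after differentiation yields the moment formula
\[
\gamma(n+j)=\frac{(n+j)!}{(2n+2j)!}\int_{-\infty}^{\infty}K(u)\,u^{2n+2j}\,du
\]
and hence a single-integral expression for $J^{d,n}(X)$. On the other side, Hadamard's factorization for the order-$1$ entire function $\xi^{(n)}(\tfrac12+iz)$, which under $\mathrm{RH}_n(T)$ splits as a product of a low-zero part $P_T^{(n)}(z)=C_n\prod_{|t_k^{(n)}|\le T}\!\!\left(1-z^{2}/(t_k^{(n)})^{2}\right)$ with real zero-parameters $t_k^{(n)}\in\mathbb{R}$, times a tail $E_T^{(n)}(z)$ nonvanishing on $|z|\le T$. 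Matching Taylor expansions at $z=0$ then writes $J^{d,n}(X)$ as a ``model'' polynomial built from the real $t_k^{(n)}$ (hyperbolic by a Hermite--Biehler-type argument) plus an error term coming from the Taylor coefficients of $E_T^{(n)}$.

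The final step is perturbative. After the substitution $w=z^{2}$ implicit in the Jensen polynomial of $\Phi^{(n)}(w):=\sum_{j\ge 0}\gamma(n+j)w^{j}/j!$, a truncation to degree $d$ probes the $z$-variable only for $|z|\lesssim\sqrt{d}$; once $\sqrt{d}\le T$, this range lies within the region controlled by $\mathrm{RH}_n(T)$, which is exactly the threshold $d\le\lfloor T\rfloor^{2}$. In this regime the tail contribution of $E_T^{(n)}$ to the first $d+1$ Taylor coefficients is small enough for a Hurwitz/Rouch\'e argument to preserve real-rootedness of the model, giving hyperbolicity of $J^{d,n}(X)$. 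The main technical obstacle is making this perturbation estimate sharp precisely at $d=\lfloor T\rfloor^{2}$, which requires Riemann--von Mangoldt-type counting bounds on zeros of $\xi^{(n)}$ beyond height $T$ together with careful moment estimates for $K(u)u^{2n}$.
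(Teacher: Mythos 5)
Your proposed first step is where the argument breaks down. You claim that $\mathrm{RH}_m(T)$ propagates upward to $\mathrm{RH}_n(T)$ for $n\geq m$ via a ``Rolle-type level-line argument,'' asserting that an off-line zero of $\xi^{(n+1)}$ at height $\leq T$ would force an off-line zero of $\xi^{(n)}$ at comparable height. This is not a valid deduction. The classical fact that differentiation preserves reality of zeros (for real entire functions of genus $0$ or $1$) requires that \emph{all} zeros of $\xi^{(n)}$ lie on the critical line; the Gauss--Lucas/Laguerre mechanism is global. When one only controls the zeros with $|\im(\rho)|\leq T$, the uncontrolled high zeros of $\xi^{(n)}$ --- which a priori may lie anywhere in the critical strip --- can pull low zeros of $\xi^{(n+1)}$ off the line. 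There is no known implication $\mathrm{RH}_m(T)\Rightarrow\mathrm{RH}_{m+1}(T)$, and your sketch does not supply one.

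The paper avoids this obstacle entirely by working with the sector classes $C(\theta,\delta)$ of entire functions (Chasse's framework, from Levin's theory). After the substitution $z\mapsto\sqrt{z}$, \emph{every} zero of $\frac{d^m}{dz^m}\psi(\sqrt{z})$ --- both the low ones, forced onto the negative real axis by $\mathrm{RH}_m(T)$, and the high ones, which are merely confined to the critical strip $|\re(z)-\tfrac12|<\tfrac12$ --- lands inside a single convex sector $S(\pi,\delta)$ with $\delta=2\arctan\bigl(\tfrac{1}{2T}\bigr)$, once positivity of the $\gamma(j)$ rules out the complementary sector. Because $S(\pi,\delta)$ is convex, Gauss--Lucas shows membership in $C(\pi,\delta)$ \emph{is} preserved under differentiation (Lemma~\ref{Chasse1}), which handles $n>m$ without ever needing $\mathrm{RH}_n(T)$. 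Hyperbolicity of $J^{d,n}(X)$ for $d\leq |\sin\delta|^{-2}$ then follows from a quantitative theorem of Chasse on Laguerre multiplier sequences (Lemma~\ref{Chasse2}), and a short computation with $\sin(2\arctan(\tfrac{1}{2T}))$ produces the threshold $\lfloor T\rfloor^2$. Your proposed Hadamard-factorization-plus-Rouch\'e perturbation is a genuinely different and much heavier route; even setting aside the false reduction in step one, you flag the crucial estimate (making the perturbation sharp at $d=\lfloor T\rfloor^2$ using zero-counting beyond height $T$) as open, and the heuristic ``truncation probes $|z|\lesssim\sqrt d$'' does not cleanly yield the stated threshold. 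So the proposal is neither complete nor a workable alternative to the paper's Chasse-based argument.
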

This is a modest generalization of work of Chasse \cite[Theorem 1.8]{Chasse}, which Theorem \ref{Thm2} recovers when $m=0$.  We include it for the sake of completeness.  Since Platt \cite{Platt} has verified $\mathrm{RH}_0(3.06\times 10^{10})$, Theorem \ref{Thm2} implies the following corollary.

\begin{corollary}
\label{cor:chasse_improved}
If $d\leq 9.36\times 10^{20}$ and $n\geq 0$, then $J^{d,n}(X)$ is hyperbolic.
\end{corollary}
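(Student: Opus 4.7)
The plan is to apply Theorem \ref{Thm2} directly, taking $m=0$ and choosing $T$ to be the height up to which $\mathrm{RH}_0(T)$ is currently known. Since we want the conclusion to hold for all $n\geq 0$, the choice $m=0$ is forced, and then the task reduces to selecting the largest $T$ for which $\mathrm{RH}_0(T)$ has been rigorously verified in the literature.

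The relevant input is Platt's verification, cited in the preceding paragraph, which established $\mathrm{RH}_0(3.06\times 10^{10})$. With $T=3.06\times 10^{10}$, Theorem \ref{Thm2} allows any degree $d$ with $d\leq \lfloor T\rfloor^2$. A direct arithmetic check gives
\begin{equation*}
\lfloor 3.06\times 10^{10}\rfloor^2 \;=\; (3.06\times 10^{10})^2 \;=\; 9.3636\times 10^{20} \;\geq\; 9.36\times 10^{20},
\end{equation*}
so for $d\leq 9.36\times 10^{20}$ and $n\geq 0$, Theorem \ref{Thm2} yields that $J^{d,n}(X)$ is hyperbolic.

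There is essentially no obstacle beyond bookkeeping: the conceptual content is entirely in Theorem \ref{Thm2}, and the numerical content is entirely in Platt's result. The only thing to be careful about is that the hypothesis of Theorem \ref{Thm2} is stated for $\xi^{(m)}$ with $m=0$, which is precisely the classical Riemann xi-function to which Platt's verification applies, so the two results compose without any loss.
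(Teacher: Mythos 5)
Your proposal is correct and matches the paper's argument exactly: apply Theorem \ref{Thm2} with $m=0$ and $T=3.06\times 10^{10}$ (Platt's verification of $\mathrm{RH}_0$), then compute $\lfloor T\rfloor^2 = 9.3636\times 10^{20}\geq 9.36\times 10^{20}$. Nothing more is needed.
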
~
\begin{remarks}~
\begin{enumerate}
\item One can generalize the notion of a Jensen polynomial by replacing the Taylor coefficients $\gamma(n)$ with other suitable arithmetic functions in \eqref{eqn:Jensen_poly}.  Questions of hyperbolicity for such polynomials can be of great arithmetic interest \cite{GORZ}.  While some of the ideas presented here might apply in other settings, we restrict our consideration and only present the strongest conclusions for $\xi(s)$ that our methods appear to permit.
\item Our proof quantifies the rate at which a certain transformation of $J^{d,n}(X)$ tends to $H_d(\frac{X}{2})$ as $n$ tends to infinity.    See Farmer \cite{Farmer(2020)} for an interesting interpretation of this as an instance of a uniform variant of Berry's ``cosine is a universal attractor'' principle \cite{Berry}.
\item It would be most desirable to prove a sort of converse to Theorem \ref{Thm2} wherein the partial results on hyperbolicity from Theorem \ref{Thm1} would directly influence the distribution of zeros of the derivatives of $\xi(s)$, or perhaps even $\xi(s)$ itself.  While Theorem \ref{Thm2} indicates that a partial understanding of the zeros of $\xi^{(m)}(s)$ influence the hyperbolicity of $J^{d,n}(X)$ for $n\geq m$, a quick inspection of the proofs in \cite{Polya} indicates that it is highly unlikely that converse influence exists unless one has hyperbolicity for all $n\geq m$ and all $d\geq 1$.  While Jensen polynomials can be used to uniformly approximate $\xi^{(n)}(\frac{1}{2}+it)$, they are ultimately quite inefficient at detecting zeros that violate $\mathrm{RH}_n$ (should any such zeros exist).  One can see this by directly plotting the aforementioned uniform approximation.  
\item After this paper was written, O'Sullivan \cite{OSullivan} wrote an interesting paper on the P{\'o}lya-Jensen criterion for the Riemann Hypothesis. Instead of working directly with the Jensen polynomials $J^{d,n}(X)$, he considers a variant of the original criterion which makes use of $\sum_{j=0}^d \binom{d}{j} \gamma(n+j) H_{d-j}(X)$. His paper complements the explicit results obtained here for this modified criterion.
\end{enumerate}
\end{remarks}

In Section \ref{section2}, we prove Theorem \ref{Thm1} using a small modification of a result of Tur\'an.  Our proof assumes two technical results (Theorems \ref{AsymNewThm} and \ref{Precision}) that we prove in Sections \ref{thm2.1proof} and  \ref{sec:proof_Precision}.   In Section \ref{sec:proof_Chasse}, we prove Theorem \ref{Thm2}.

\subsection*{Acknowledgements}

We thank the referee for a thorough reading and helpful comments.  The second author thanks the support of the Thomas Jefferson Fund and the NSF.  The fourth author began this work while partially supported by a NSF Postdoctoral Fellowship.

\section{Proof of Theorem \ref{Thm1}}\label{section2}

The effective refinement of the work in \cite{GORZ} provided by Theorem~\ref{Thm1} uses different methods. Our proofs are facilitated by renormalizations of several objects in \cite{GORZ}.

\subsection{New conventions and preliminaries}

Recall the setup in \cite[Section 5]{GORZ}.  It was shown that for each $d\geq 1$, there exist positive numbers $A(n)$, $\delta(n)$, $ g_3(n)$, $g_4(n),\ldots,g_d(n)$ such that 
\begin{equation}\label{AsymOld}
\log\Big(\frac{\gamma(n+j)}{\gamma(n)}\Big)=A(n)j -\delta(n)^2 j^2+\sum_{i=3}^{d}g_i(n) j^i+o(\delta(n)^d),
\end{equation}
with $g_i(n)=O(n^{1-i})=o(\delta(n)^{i})$ and  $	\delta(n)\sim \frac{1}{\sqrt{2n}}$.  From these, we define
\begin{equation}
\label{Jtilde}
	\widehat{J}^{d,n}(X):= \frac{\delta(n)^{-d}}{\gamma(n)}J^{d,n} \Big(\frac{\delta(n) X-1}{\exp(A(n))}\Big).
\end{equation}
Estimates in \cite{GORZ} are written in terms of the behavior of $\delta(n)$, and there is considerable latitude in the choice of $\delta(n)$.  In this sense, $\delta(n)$ serves as a uniformizer for the calculations in \cite{GORZ}.

We introduce a more refined uniformizer
\begin{equation}\label{delta}
\Delta(M):= \sqrt{\frac{1}{2}\Big(1-\frac{\gamma(M-2)\gamma(M)}{\gamma(M-1)^2}\Big)}
\end{equation}
and the a normalization $\widetilde{J}^{d,n}(X)$ of the polynomials $J^{d,n}(X)$.  It will become apparent that $\Delta(M)$ is a more convenient and more accurate uniformizer than $\delta(n)$, which is important for our eventual goal of an effective lower bound for $n$ in terms of $d$.  Before defining $\widetilde{J}^{d,n}(X)$, we establish some basic properties of $\Delta(M)$.  As a consequence of the hyperbolicity of $J^{2,n}(X)$ \cite{CV}, we know that $\gamma(n-2)\gamma(n)\leq \gamma(n-1)^2$ for all $n\geq 3$.  This establishes the log concavity of $\gamma(n)$.  It follows that $\Delta(M)\in\R$ for all $M\geq 3$.  The next theorem contains some key results for $\Delta(M)$.

\begin{theorem}\label{AsymNewThm}
Let $\Delta(M)$ be as in \eqref{delta}.
\begin{enumerate}
	\item We have $\Delta(M)\sim 1/\sqrt{2M}$.  In particular, if $C>1$, then there exists $M_C>C/(C-1)$ (depending only on $C$) such that if $M>M_C$, then $1/\sqrt{2C(M-1)}\leq\Delta(M)\leq 1/\sqrt{M}$.
\item For each integer $m\geq 1$, there exists a function $G_m(z)$, holomorphic for $\re(z)>1$, such that for all integers $1\leq j< M$ we have 
\begin{equation}\label{AsymNew}
\log\Big(\frac{\gamma(M-j)}{\gamma(M)}\Big)=-\sum_{m=1}^\infty G_m(M)\Delta(M)^{2m-2} j^m.
\end{equation}
With $C>1$ as in part (1), the bound $|G_m(M)|\ll_C(2C)^{m}$ holds for all integers $m,M\geq 1$.  We also have the limit $\lim_{M\to \infty}G_m(M)=\frac{2^{m-1}}{m(m-1)}$.
\item We have 
\begin{equation}\label{G2}
G_2(M)=1+(1-3G_3(M))\Delta(M)^2+O(\Delta(M)^{4}).
\end{equation}
\end{enumerate}
\end{theorem}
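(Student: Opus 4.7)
The plan is to prove the three parts sequentially, using the earlier expansion \eqref{AsymOld} from \cite{GORZ} as the starting point. The key observation is that the new uniformizer $\Delta(M)$ agrees with $\delta(M-1)$ to leading order, so \eqref{AsymNew} is essentially a repackaging of \eqref{AsymOld} in a more convenient normalization; what requires work is extracting quantitative bounds on the coefficients $G_m(M)$.

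For part (1), I would evaluate $\Delta(M)^2$ directly using \eqref{AsymOld}. Applying the expansion with $n = M-1$ and $j = \pm 1$ and then adding the two equations, all odd-index terms cancel and one obtains
\[
\log\frac{\gamma(M-2)\gamma(M)}{\gamma(M-1)^2} = -2\delta(M-1)^2 + O(\delta(M-1)^4).
\]
Exponentiating and comparing to the definition \eqref{delta} gives $\Delta(M)^2 = \delta(M-1)^2 + O(M^{-2}) \sim 1/(2M)$. The explicit two-sided bound for $M > M_C$ then follows by making the $O$-constants effective using the quantitative bounds on $g_i(n)$ from \cite{GORZ}.

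For part (2), I would define $G_m(M)$ by matching coefficients of $j^m$ in $\log(\gamma(M-j)/\gamma(M))$ against $-G_m(M)\Delta(M)^{2m-2}$, with $\gamma$ extended to complex $M$ with $\re(M)>1$ via the identity $\gamma(M) = M!\,\xi^{(2M)}(1/2)/(2M)!$ (realizing $\xi^{(2M)}(1/2)$ as a contour integral of $\xi(1/2+z)/z^{2M+1}$ and analytically continuing in $M$ through a Mellin-type representation). Holomorphicity of $G_m(M)$ then follows from that of the extended $\log\gamma(M)$, and the bound $|G_m(M)|\ll_C (2C)^m$ would come from Cauchy estimates on a disk in the $j$-variable whose radius is a fixed multiple of $1/\Delta(M) \asymp \sqrt{2CM}$. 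The limit $\lim_{M\to\infty}G_m(M) = 2^{m-1}/(m(m-1))$ (for $m \ge 2$) follows from a saddle-point analysis: since $\xi(1/2+z)$ is entire of order one, its Taylor coefficients force $\log\gamma(M) = -M\log M + O(M)$, so $\partial_M^m\log\gamma(M) \sim (-1)^{m-1}(m-2)!\,M^{1-m}$ for $m\ge 2$; combined with $\Delta(M)^{-(2m-2)} \sim (2M)^{m-1}$ this yields the claimed limit.

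For part (3), I would compare the second difference $\log\gamma(M-2) - 2\log\gamma(M-1) + \log\gamma(M)$ evaluated two ways. On one hand, the definition \eqref{delta} gives $\log(1-2\Delta(M)^2) = -2\Delta(M)^2 - 2\Delta(M)^4 + O(\Delta(M)^6)$. On the other, substituting \eqref{AsymNew} causes the $G_1$ contributions to cancel and produces $-2G_2(M)\Delta(M)^2 - 6G_3(M)\Delta(M)^4 + O(\Delta(M)^6)$, where the tail is controlled by the bounds on $G_4(M)$ from part (2). Equating these two expressions and solving for $G_2(M)$ yields exactly \eqref{G2}. The main obstacle throughout is the uniform-in-$m$ exponential bound in part (2): a purely formal rearrangement of \eqref{AsymOld} supplies the existence of $G_m(M)$ but not geometric control in $m$, and obtaining such control requires quantitative analytic knowledge of the disk of convergence of $\log(\gamma(M-j)/\gamma(M))$ in $j$, together with careful saddle-point management to keep the dependence on $C$ clean.
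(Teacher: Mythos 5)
Your overall framework is close to the paper's: both extend $\gamma$ to complex argument via a Mellin-type integral representation, expand $\log(\gamma(M-j)/\gamma(M))$ as a power series in $j$, and read off the coefficients as $-G_m(M)\Delta(M)^{2m-2}$.  Your derivation of part (3) from the second difference $\log R_M(2)-2\log R_M(1)=\log(1-2\Delta(M)^2)$ is essentially the same computation the paper performs (the paper squares the $\Delta(M)$ expansion instead, but the resulting quadratic is the same), and your limit calculation for $G_m(M)$ has the right shape.

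The gap lies in the uniform bound $|G_m(M)|\ll_C(2C)^m$, which the paper itself flags as the crux.  Your proposed Cauchy estimate on a disk of radius a \emph{fixed multiple} of $1/\Delta(M)\asymp\sqrt{M}$ cannot work: writing $a_m(M)$ for the $m$-th Taylor coefficient of $\log R_M(j)$, you need $|a_m(M)|\ll_C (2C)^m\Delta(M)^{2m-2}\asymp (C/M)^m\cdot M$, and Cauchy on a disk of radius $r$ only yields $|a_m|\le \sup_{|j|=r}|\log R_M|/r^m$.  To get decay like $(C/M)^m$ you must take $r\asymp M$, not $r\asymp\sqrt{M}$; with $r\asymp\sqrt{M}$ the bound degrades by a factor $(\sqrt{M})^m$ and the claimed geometric control in $m$ fails.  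Moreover, even after correcting the radius to $r\asymp M$ (essentially the full radius of analyticity $|j|<\re(M)-1$), you still have to show that $\sup_{|j|\asymp M}|\log R_M(j)|\ll M$ with an implied constant that is uniform as $M\to\infty$.  This uniformity is exactly where the paper introduces the reparametrization $j=\lambda(M-2)$ and the regularized function $R^*_M(\lambda)$ of \eqref{RMod}: the point is that $R^*_M(\lambda)$ extends to a holomorphic function of $(\lambda,M)$ with $|\lambda|\le 1$ and $M$ ranging over the \emph{compactified} interval $[3,\infty]$, so that absolute and uniform convergence of the Taylor series on that compact set delivers the bound \eqref{amAsym} (equivalently, $a_m(M)(M-2)^{m-1}+\tfrac{1}{m(m-1)}\to 0$ uniformly in $M$).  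Your proposal acknowledges that ``careful saddle-point management'' is needed but does not supply the compactification idea that makes the uniformity in $M$ go through; that is the missing ingredient.  Relatedly, for part (1), appealing to ``quantitative bounds on $g_i(n)$ from \cite{GORZ}'' to produce the explicit two-sided bound on $\Delta(M)$ is weaker than what is available there: the paper remarks that \cite{GORZ}'s estimates on $\delta(n)$ and $g_i(n)$ do not come with the uniformity needed here, which is precisely why the new uniformizer $\Delta(M)$ and the $R^*_M$-argument were introduced.
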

We will prove Theorem \ref{AsymNewThm} in Section \ref{thm2.1proof}.
\begin{remark}
The uniform bound on $|G_m(M)|$ is critical for our proofs. While $G_m(M)$ is a bounded function of $M$ for {\it fixed} $m$, we need to bound $|G_m(M)|$ when $m$ and $M$ vary jointly.
\end{remark}

Now that we have listed some key properties of $\Delta(M)$, we define
\begin{equation}\label{Jhat}
\widetilde{J}^{d,n}(X):=
 \frac{{\color{black}\gamma(n+d)^{d-1}}}{{\color{black}\gamma(n+d-1)^{d}}\cdot \Delta(n+d)^{d}} J^{d,n}\Big(\frac{\gamma(n+d-1)}{\gamma(n+d)}\cdot (\Delta(n+d)X-1)\Big).
 \end{equation}
For future convenience, we define the coefficients $A_{d,k}(n)$ by the expansion
\begin{equation}
\label{eqn:Jdn_Adk}
\widetilde{J}^{d,n}(X)=\sum_{k=0}^d A_{d,k}(n)X^{d-k}.
\end{equation}
The following lemma explains our reason for working with these new normalizations.

\begin{lemma}
\label{3term}
If $d\geq 1$ and $n \ge 0$, then $A_{d,0}(n)=1$, $A_{d,1}(n)=0$, and $A_{d,2}(n)=-d(d-1)$.  In particular, $\widetilde{J}^{1,n}(X)=H_1(\frac{X}{2})$, $\widetilde{J}^{2,n}(X)=H_2(\frac{X}{2})$, and $\deg\big(\widetilde{J}^{d,n}(X)-H_d(\frac{X}{2})\big)\leq d-3$ for $d\geq 3$.
\end{lemma}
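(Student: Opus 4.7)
The plan is to expand \eqref{Jhat} directly via the binomial theorem, obtain an explicit closed form for each coefficient $A_{d,k}(n)$, and then read off the three claimed identities by inspection; the consequence for $H_d(X/2)$ will follow from the classical explicit formula for the Hermite polynomials. Writing $M = n+d$, $r = \gamma(M-1)/\gamma(M)$, and $\Delta = \Delta(M)$, the substitution $Y = r(\Delta X - 1)$ in $J^{d,n}(Y) = \sum_{j}\binom{d}{j}\gamma(n+j)Y^j$ together with a double binomial expansion (plus the reindexing $\ell = d-j$ and the identity $\binom{d}{d-\ell}\binom{d-\ell}{d-k} = \binom{d}{k}\binom{k}{\ell}$) produces the closed form
\begin{equation*}
A_{d,k}(n) \;=\; \frac{(-1)^k\binom{d}{k}}{\Delta^k}\sum_{\ell=0}^{k}(-1)^\ell\binom{k}{\ell}\,q_\ell, \qquad q_\ell := \frac{\gamma(M-\ell)}{\gamma(M)}\left(\frac{\gamma(M)}{\gamma(M-1)}\right)^{\ell}.
\end{equation*}

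From this formula the three verifications are immediate. Directly from the definition, $q_0 = q_1 = 1$, and the defining identity \eqref{delta} of $\Delta(M)$ yields
\begin{equation*}
q_2 \;=\; \frac{\gamma(M-2)\gamma(M)}{\gamma(M-1)^2} \;=\; 1 - 2\Delta^2.
\end{equation*}
For $k=0$ only $\ell=0$ contributes, so $A_{d,0}(n) = 1$. For $k=1$ the inner sum is $q_0 - q_1 = 0$, so $A_{d,1}(n) = 0$. For $k=2$ the inner sum is $q_0 - 2q_1 + q_2 = -2\Delta^2$, which cancels the $\Delta^{-2}$ prefactor and gives $A_{d,2}(n) = -2\binom{d}{2} = -d(d-1)$.

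The Hermite consequence is then a matter of matching top coefficients. The classical expansion
\begin{equation*}
H_d(X/2) \;=\; \sum_{k=0}^{\lfloor d/2\rfloor}\frac{(-1)^k\, d!}{k!(d-2k)!}\,X^{d-2k}
\end{equation*}
has coefficients in degrees $d$, $d-1$, $d-2$ equal to $1$, $0$, $-d(d-1)$, which coincide with $A_{d,0}(n)$, $A_{d,1}(n)$, $A_{d,2}(n)$. Hence $\widetilde{J}^{d,n}(X) - H_d(X/2)$ has degree at most $d-3$ when $d \geq 3$, and is identically zero when $d = 1$ or $d = 2$ since both sides are polynomials of degree $d$ fully determined by their top three coefficients.

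There is no real technical obstacle in this lemma; it is a direct verification that the normalization \eqref{Jhat} and the uniformizer \eqref{delta} were engineered precisely so that $\widetilde{J}^{d,n}(X)$ matches $H_d(X/2)$ through its top three coefficients. This exact match is what distinguishes $\Delta(M)$ from the less refined $\delta(n)$ of \eqref{AsymOld}, and it supplies the base case for bounding the remaining tail $\widetilde{J}^{d,n}(X) - H_d(X/2)$ in the subsequent analysis.
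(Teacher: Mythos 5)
Your proposal is correct and takes essentially the same approach as the paper, which simply declares the lemma ``straightforward to verify'' from the definitions; your argument fills in those details. Note that the closed form you derive, $A_{d,k}(n)=\binom{d}{k}\Delta(n+d)^{-k}\sum_{\ell=0}^{k}(-1)^{k-\ell}\binom{k}{\ell}q_\ell$ with $q_\ell=\gamma(M-\ell)\gamma(M)^{\ell-1}/\gamma(M-1)^\ell$, is precisely equation \eqref{A1} of the paper (with $q_\ell = S(\ell;n+d)$), so you have independently rederived a formula the authors record later in Section \ref{sec:proof_Precision}.
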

\begin{proof}
This is straightforward to verify from \eqref{eqn:Jensen_poly}, \eqref{eqn:Hermite}, and (\ref{Jhat}).
\end{proof}

We use Theorem \ref{AsymNewThm} to prove asymptotics for the coefficients $A_{d,k}(n)$ for $k\geq 3$.

\begin{theorem}
\label{Precision}
Let $d\geq 4$, $n\geq 0$, and $3 \le k \le d$ be integers, and let $1<C<2$.  Recall the definition of $M_C$ from Theorem \ref{AsymNewThm}(1).  If
$n+d>\max\{10k^3,M_C\}$, then
{\small
\[
\frac{(-1)^{\lfloor \frac{k}{2}\rfloor}(d-k)!\lfloor \frac{k}{2}\rfloor!}{d!}A_{d,k}(n)=\begin{cases}
	1+Z_{n+d}(\lfloor \frac{k}{2}\rfloor) \Delta(n+d)^2+O_C(k^{6}(4C)^k\Delta(n+d)^4)&\mbox{if $k$ is even,}\\
\lfloor \frac{k}{2}\rfloor (G_3(n+d)-2)\Delta(n+d)+O_C(k^4 (4C)^k \Delta(n+d)^3)&\mbox{if $k$ is odd,}
\end{cases}
\]}
where $Z_{n+d}(t):=t(t-1) ( -\frac{2}{3}(3t+2) +2t G_3(n+d) - \frac{t - 2}{2}G_3(n+d)^2 - G_4(n+d))$.
\end{theorem}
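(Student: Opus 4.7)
The plan is to recast $A_{d,k}(n)$ as a scaled $k$-th forward difference. Setting $M:=n+d$, expanding the definition (\ref{Jhat}) of $\widetilde{J}^{d,n}(X)$, reindexing $i=d-j$, and applying $\binom{d}{i}\binom{d-i}{d-k}=\binom{d}{k}\binom{k}{i}$, I get
\[
A_{d,k}(n) \;=\; \binom{d}{k}\,\Delta(M)^{-k}\sum_{i=0}^{k}\binom{k}{i}(-1)^{k-i}\,r_M(i),\qquad r_M(i):=\frac{\gamma(M-i)\gamma(M)^{i-1}}{\gamma(M-1)^{i}},
\]
where the inner sum is the $k$-th forward difference $\mathscr{D}^{k}r_M(0)$ (using $\mathscr{D}$ to avoid conflict with the uniformizer $\Delta(M)$), and $r_M(0)=r_M(1)=1$. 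By Theorem~\ref{AsymNewThm}(2), writing $\log r_M(i)=[\log\gamma(M-i)-\log\gamma(M)]+i[\log\gamma(M)-\log\gamma(M-1)]$ makes the $m=1$ contribution telescope to zero, leaving $\log r_M(i)=-\sum_{m\geq 2}G_m(M)\,\Delta(M)^{2m-2}(i^m-i)$. Setting $\tau:=\Delta(M)^{2}$ and exponentiating yields the formal expansion $r_M(i)=\sum_{p\geq 0}c_p(i)\,\tau^{p}$ with
\[
c_p(i) \;=\; \sum_{j\geq 0}\frac{(-1)^j}{j!}\sum_{\substack{q_1+\cdots+q_j=p\\ q_r\geq 1}}\prod_{r=1}^{j}G_{q_r+1}(M)\,(i^{q_r+1}-i),
\]
a polynomial in $i$ of degree at most $2p$. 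Because $\mathscr{D}^{k}$ annihilates any polynomial of degree $<k$, only terms with $2p\geq k$ contribute to $\mathscr{D}^{k}r_M(0)$; this forces $p\geq \ell$ for $k=2\ell$ and $p\geq \ell+1$ for $k=2\ell+1$, producing the parity dichotomy in the theorem.

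Next, I would extract the leading and next-to-leading contributions. For even $k=2\ell$, $c_\ell(i)$ has a unique summand of degree exactly $2\ell$ (from $j=\ell$ with every $q_r=1$), giving $\mathscr{D}^{2\ell}c_\ell(0)=(-1)^\ell(2\ell)!\,G_2(M)^\ell/\ell!$; after normalization this is exactly $G_2(M)^\ell$, which by Theorem~\ref{AsymNewThm}(3) equals $1+\ell(1-3G_3(M))\tau+O(\tau^{2})$. The remaining $\tau$-correction in $Z_M(\lfloor k/2\rfloor)$ comes from $\mathscr{D}^{2\ell}c_{\ell+1}(0)$: I enumerate the compositions of $\ell+1$ with $j\in\{\ell-1,\ell,\ell+1\}$ (the only ones giving polynomial degree at least $2\ell$) and evaluate each finite difference using $\mathscr{D}^{k}[i^{n}](0)=k!\,S(n,k)$ along with $S(n,n-1)=\binom{n}{2}$ and a closed form for $S(n,n-2)$. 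Combining these contributions with the $G_2(M)^\ell$-expansion produces the stated $Z_M(\lfloor k/2\rfloor)$. The odd case $k=2\ell+1$ is structurally analogous but shorter: only $j=\ell$ (one $q_r=2$) and $j=\ell+1$ (all $q_r=1$) contribute to $\mathscr{D}^{2\ell+1}c_{\ell+1}(0)$, and a direct evaluation gives the leading $\lfloor k/2\rfloor(G_3(M)-2)\Delta(M)$.

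For the error term I would control the tail of the $\tau$-expansion uniformly in $k$. Using $|G_m(M)|\ll_C(2C)^{m}$ from Theorem~\ref{AsymNewThm}(2) together with $|i^{m}-i|\leq k^{m}$ for $0\leq i\leq k$, each summand of $c_p(i)$ is bounded by a quantity of order $(2Ck)^{p+j}$, and the number of ordered compositions of $p$ into $j$ positive parts is at most $\binom{p-1}{j-1}\leq 2^{p-1}$. Applying $|\mathscr{D}^{k}c_p(0)|\leq 2^{k}\max_{i\leq k}|c_p(i)|$ and summing yields a geometric series in $p$; the hypothesis $n+d>10k^{3}$ together with $\Delta(M)^{2}\ll 1/M$ keeps $k^{2}\tau$ small enough for the series to converge, and tracking powers of $k$ and $\Delta(M)$ in the truncation produces the claimed error terms, with the $(4C)^{k}$ factor arising from combining the $(2C)^m$ growth of $G_m$ with the $2^{k}$ factor from the finite difference. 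The main obstacle is the combinatorial bookkeeping needed to pin down $Z_M(\lfloor k/2\rfloor)$ precisely: several composition patterns at varying lengths contribute with different Stirling-number weights, and they must be combined consistently with the $G_2(M)^\ell$ expansion. The uniform tail bound, while elementary in spirit, also requires care so that the $(2C)^{m}$ growth, the $k^{m}$ growth of $(i^{m}-i)$, and the $2^{k}$ from the binomial sum compound correctly into the claimed $k^{6}(4C)^{k}$ and $k^{4}(4C)^{k}$ prefactors.
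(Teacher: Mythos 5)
Your structural plan is exactly the paper's: both start from the formula $A_{d,k}(n)=\binom{d}{k}\Delta(M)^{-k}\sum_{i=0}^{k}(-1)^{k-i}\binom{k}{i}S(i;M)$ (the paper calls your $r_M(i)$ by the name $S(i;M)$), both recognize the inner sum as a $k$-th finite difference, both exploit the parity dichotomy created by the degree of the polynomial under the difference operator, and both expand by compositions/partitions using $|G_m(M)|\ll_C(2C)^m$. The only real difference in organization is that you group the double sum by powers of $\tau=\Delta(M)^2$ to get polynomials $c_p(i)$, while the paper groups by powers of $i$ to get $Q_m(M)$ and then introduces the difference numbers $y_{m,k}=\sigma_{k,x}(x^m)$.

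Where your proposal has a genuine gap is in the error estimate. You propose to control the tail via $|\mathscr{D}^{k}c_p(0)|\leq 2^{k}\max_{i\leq k}|c_p(i)|$, but this trivial bound is too lossy by an exponential factor. Concretely, for the term $q=2p$ with $p$ near $\lceil k/2\rceil$, the true finite difference $\mathscr{D}^{k}[i^{q}](0)=k!\,S(q,k)$ is of size roughly $k!\cdot k^{2(q-k)}/(q-k+1)!$, whereas $2^{k}\max_{i\leq k}|i^{q}|=2^{k}k^{q}$. Since $k!/k^{k}\sim e^{-k}\sqrt{2\pi k}$, the trivial bound overshoots by a factor on the order of $(2e)^{k}$. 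Tracing this through your truncation at $p=\lceil k/2\rceil+2$, your error term comes out as $O_C(k^{O(1)}(4Ce)^{k}\Delta(M)^{4})$ (even case) rather than the claimed $O_C(k^{6}(4C)^{k}\Delta(M)^{4})$, and $(4Ce)^{k}$ is strictly bigger, so you have not proved the stated inequality. The paper closes this gap with Lemma~\ref{smk}: the identity $y_{k+i,k}=k!\binom{k+i}{1+i}P_i(k)$ together with the \emph{nontrivial} bound $P_i(k)\leq k^{i-1}$ (which they prove by comparing the Taylor expansions of $\big(\frac{e^x-1}{x}\big)^k$ and $\frac{e^{kx}-1}{kx}$ and running an induction). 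Substituting this sharp bound on the finite-difference numbers, rather than $2^{k}\max|c_p(i)|$, is what recovers $(4C)^{k}$. Your main-term computation via $\mathscr{D}^{k}[i^{n}](0)=k!\,S(n,k)$, $S(n,n-1)=\binom{n}{2}$, etc.\ already implicitly uses the exact values of these quantities; the fix is to use the same sharp control for the tail indices $q=k+i$ with $i\geq 4$ as well, which in effect forces you to prove something equivalent to the paper's Lemma~\ref{smk}.
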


We prove Theorem \ref{Precision} in Section \ref{sec:proof_Precision}.

\subsection{Proof of Theorem \ref{Thm1}}

We use the following result to prove Theorem \ref{Thm1}.

\begin{lemma}
\label{lem:turan}
For $0\leq j\leq d$, define
\begin{equation}
\label{c_d,n,j}
c_{d,n,j} := \sum_{i=0}^{\lfloor \frac{j}{2}\rfloor} \frac{(d-j+2i)!}{i!(d-j)!}A_{d,j-2i}(n),
\end{equation}
where $A_{d,k}(n)$ is defined by \eqref{eqn:Jdn_Adk}.  If
\begin{equation}\label{explicit Turan criterion}
\sum_{j=3}^d 2^{-j} \frac{(d-j)!}{(d-1)!} c_{d,n,j}^2 < 1,
\end{equation}
then $J^{d,n}(X)$ is hyperbolic.
\end{lemma}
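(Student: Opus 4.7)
The strategy is to expand $\widetilde{J}^{d,n}(X)$ in the Hermite basis $\{H_{d-j}(X/2)\}_{j=0}^d$, observe via Lemma~\ref{3term} that the leading three coefficients are forced to be $1,0,0$, and then show that the hypothesis forces $\widetilde{J}^{d,n}$ to be a sufficiently small perturbation of $H_d(X/2)$ in this basis that all of its zeros remain real.

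First, I would invert the monomial expansion using the identity $X^m = \sum_{i=0}^{\lfloor m/2\rfloor} \frac{m!}{i!(m-2i)!} H_{m-2i}(X/2)$, which follows by multiplying the generating function $\sum_m H_m(X/2) t^m/m! = e^{Xt-t^2}$ by $e^{t^2}$. Substituting into \eqref{eqn:Jdn_Adk} and reindexing produces $\widetilde{J}^{d,n}(X) = \sum_{j=0}^d c_{d,n,j} H_{d-j}(X/2)$, with $c_{d,n,j}$ exactly as in \eqref{c_d,n,j}. Lemma~\ref{3term} then forces $c_{d,n,0}=1$ and $c_{d,n,1}=c_{d,n,2}=0$, so the remainder $R(X) := \widetilde{J}^{d,n}(X) - H_d(X/2) = \sum_{j=3}^d c_{d,n,j} H_{d-j}(X/2)$ starts at index $3$.

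Next, choose the weights $\lambda_j := 2^j(d-1)!/(d-j)!$ precisely so that $1/\lambda_j$ matches the coefficient of $c_{d,n,j}^2$ in \eqref{explicit Turan criterion}. Cauchy--Schwarz then gives
\[
|R(X)|^2 \leq \Bigl(\sum_{j=3}^d \tfrac{c_{d,n,j}^2}{\lambda_j}\Bigr)\, B(X), \qquad B(X) := \sum_{j=3}^d \lambda_j H_{d-j}(X/2)^2,
\]
with the first factor less than $1$ by hypothesis. The crux is to prove $B(\xi) \leq H_d(\xi/2)^2$ at every extremum $\xi$ of $H_d(X/2)$. Writing $Y = \xi/2$, the extremum condition becomes $H_{d-1}(Y)=0$ (since $\tfrac{d}{dX}H_d(X/2) = d\,H_{d-1}(X/2)$), and the three-term recurrence $H_d(Y) = 2YH_{d-1}(Y) - 2(d-1)H_{d-2}(Y)$ reduces to $H_d(Y) = -2(d-1)H_{d-2}(Y)$. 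After reindexing, $B(\xi) = 2^d(d-1)! \sum_{k=0}^{d-3} H_k(Y)^2/(2^k k!)$, and the confluent Christoffel--Darboux identity
\[
\sum_{k=0}^{d-2} \frac{H_k(Y)^2}{2^k k!} = \frac{H_{d-1}(Y)^2 - H_{d-2}(Y) H_d(Y)}{2^{d-1}(d-2)!},
\]
followed by subtracting the $k=d-2$ summand, collapses $B(\xi)$ to $4(d-1)(d-2)H_{d-2}(Y)^2$, while $H_d(\xi/2)^2 = 4(d-1)^2 H_{d-2}(Y)^2$. Hence $B(\xi)/H_d(\xi/2)^2 = (d-2)/(d-1) < 1$.

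Combining these estimates yields $|R(\xi)| < |H_d(\xi/2)|$ at each of the $d-1$ extrema of $H_d(X/2)$. Since $H_d(X/2)$ takes alternating signs at these extrema and $\widetilde{J}^{d,n}$ shares its leading behavior at $\pm\infty$, the perturbed polynomial $\widetilde{J}^{d,n}$ must also alternate sign at the extrema and match $H_d(X/2)$ at infinity, producing $d$ sign changes by the intermediate value theorem and hence $d$ real zeros. Hyperbolicity of $\widetilde{J}^{d,n}$ then transfers to $J^{d,n}(X)$ via the invertible affine substitution in \eqref{Jhat}. The hard step will be orchestrating the Christoffel--Darboux reduction at the extrema: without simultaneously using the CD identity and the three-term recurrence, the Cauchy--Schwarz bound loses a factor growing in $d$, and only their combination closes the estimate with constant~$1$.
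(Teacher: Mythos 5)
Your proof is correct, but it departs from the paper's in one substantive way: the paper expands $\widetilde{J}^{d,n}$ in the Hermite basis via the inversion formula \cite[Eq.\ 18.18.20]{DLMF} exactly as you do, but then cites Tur\'an \cite[Theorem III]{Turan} as a black box: if $\sum_{j=0}^{N-2}2^j j!\,c_j^2 < 2^N(N-1)!\,c_N^2$, then $\sum_{j=0}^N c_j H_j$ is hyperbolic with simple roots. Since Lemma~\ref{3term} forces $c_{d,n,0}=1$ and $c_{d,n,1}=c_{d,n,2}=0$, that condition reduces directly to \eqref{explicit Turan criterion}. You instead reprove Tur\'an's criterion from scratch for this instance via Cauchy--Schwarz with weights $\lambda_j=2^j(d-1)!/(d-j)!$, the confluent Christoffel--Darboux identity, and the three-term recurrence evaluated at the zeros of $H_{d-1}$; your arithmetic checks out (the key identity $2[(d-1)H_{d-2}^2-(d-2)H_{d-3}H_{d-1}]=H_{d-1}^2-H_{d-2}H_d$ reconciles your Christoffel--Darboux form with the standard one, and the ratio $(d-2)/(d-1)<1$ closes the estimate). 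What the self-contained route buys is transparency and an explicit elementary verification in place of an external reference from a less accessible source; what it costs is length. In either approach, the transfer from $\widetilde{J}^{d,n}$ back to $J^{d,n}$ via the invertible affine substitution in \eqref{Jhat} is the same.
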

\begin{proof}
There exist $A,B,C\in\R$ (depending on $n$ and $d$) such that $\widetilde{J}^{d,n}(X)=AJ^{d,n}(BX+C)$, hence $J^{d,n}(X)$ is hyperbolic if and only $\widetilde{J}^{d,n}(X)$ is hyperbolic.  We apply the inversion formula \cite[Equation 18.18.20]{DLMF} to \eqref{eqn:Jdn_Adk} and obtain $\widetilde{J}^{d,n}(X) = \sum_{j=0}^d c_{d,n,j}H_{d-j}(\frac{X}{2})$.  Tur{\'a}n \cite[Theorem III]{Turan} proved that if $c_j\in\R$ for $0\leq j\leq N$ and
	\begin{equation}
	\label{eqn:turan_bound}
	\sum_{j=0}^{N-2} 2^j j! c_j^2 < 2^N (N-1)! c_N^2,
	\end{equation}
	then all roots of $\sum_{j = 0}^N c_j H_j(z)$ (hence $\sum_{j = 0}^N c_j H_j(\frac{z}{2})$) are real and simple.  Since $c_{d,n,0} = 1$ and $c_{d,n,1} = c_{d,n,2} = 0$ by Lemma \ref{3term}, the inequality \eqref{eqn:turan_bound} applied to our setting reduces to \eqref{explicit Turan criterion}.
\end{proof}

\begin{proof}[Proof of Theorem \ref{Thm1}]
We will show that there exists a suitably large absolute constant $c>0$ such that if $n\geq ce^{d/2}$, then \eqref{explicit Turan criterion} holds, in which case Lemma \ref{lem:turan} applies.  We now appeal to Theorem \ref{Precision}.  When $j = 2\ell$, we use the even case of Theorem \ref{Precision}, \eqref{c_d,n,j}, and the fact that $A_{d,0} = 1$ and $A_{d,2} = -d(d-1)$ to find that $c_{d,n,2\ell}$ equals
\begin{multline*}\label{asym c_d,n,j}
 \sum_{i=0}^{\ell} \frac{(d-2i)!}{(\ell - i)!(d-2\ell)!}A_{d,2i}(n) \nonumber \\
=\frac{d!}{\ell!(d-2\ell)!} \Big[ \sum_{i=0}^\ell {\ell \choose i } (-1)^i + \Delta(n+d)^2\sum_{i=2}^\ell {\ell \choose i } (-1)^i Z_{n+d}(i) + O_C\Big(\Delta(n+d)^4\sum_{i=2}^\ell {\ell \choose i } i^6 (4C)^{2i} \Big) \Big]\\
=\frac{d!}{\ell!(d-2\ell)!} \Big[ \Delta(n+d)^2\sum_{i=2}^\ell {\ell \choose i } (-1)^i Z_{n+d}(i) + O_C\Big(\Delta(n+d)^4\sum_{i=2}^\ell {\ell \choose i } i^6 (4C)^{2i} \Big) \Big].
\end{multline*}

For a function $f$ defined on the nonnegative integers, we define the $k$-th difference operator
\begin{equation}
\label{eqn:differencing_operator}
\sigma_{k,x}(f(x)) := \sum_{j=0}^k (-1)^{k-j}{k\choose j}f(j)	
\end{equation}
Note that $f(x)$ is given by polynomial of degree at most $d$ if and only if  $\sigma_{k,x}(f(x))=0$ for all $k>d$. Since $Z_{n+d}(t)$ is a polynomial in $t$ of degree $3$ with $Z_{n+d}(0) = Z_{n+d}(1) = 0$, it follows if $\ell\geq 4$, then $\sum_{i=2}^\ell {\ell \choose i } (-1)^i Z_{n+d}(i)=0$.  Thus if $\ell \ge 4$, then we apply the bound $i^6\leq \ell^6$ to conclude that
\begin{equation}
\label{even c asymptotics}
	c_{d,n,2\ell} \ll_C \frac{d!}{(d-2\ell)!\ell!}\Delta(n+d)^4  \ell^6\sum_{i=2}^\ell \begin{pmatrix}\ell \\ i \end{pmatrix} (4C)^{2i} \ll_C \frac{d!}{(d-2\ell)!\ell!}\ell^6 (16C^2 + 1)^\ell \Delta(n+d)^4.
\end{equation}
The bound \eqref{even c asymptotics} also holds when $\ell = 2$ and $\ell = 3$ by bounding the main terms directly. A symmetric calculation using the odd case of Theorem \ref{Precision} reveals that
\begin{equation}\label{odd c asymptotics}
	c_{d,n,2\ell + 1} \ll_C \frac{d!}{(d-2\ell - 1)! \ell! }\ell^4 (16C^2 + 1)^\ell \Delta(n+d)^3.
\end{equation}

The bound \eqref{even c asymptotics} leads to a bound for the even-indexed terms in \eqref{explicit Turan criterion}, namely
\begin{equation*}
\sum_{\substack{3\leq j\leq d \\ \textup{$j$ even}}}2^{-j}\frac{(d-j)!}{(d-1)!}c_{d,n,j}^2\ll_C d\Delta(n+d)^8 \sum_{1\leq \ell\leq d/2} \binom{d}{2\ell} \binom{2\ell}{\ell} \ell^{12} \Big(\frac{16C^2 + 1}{4}\Big)^\ell.
\end{equation*}
Note that $\binom{2\ell}{\ell} \sim \frac{4^\ell}{\sqrt{\pi \ell}}$ by Stirling's formula.  Trivially bounding $\ell^{12}\leq d^{12}$, we find that
\begin{equation}
\label{eqn:even_piece}
	\sum_{\substack{3\leq j\leq d \\ \textup{$j$ even}}}2^{-j}\frac{(d-j)!}{(d-1)!}c_{d,n,j}^2\ll_C d^{13}(1+\sqrt{1+16C^2})^{d}\Delta(n+d)^8.
\end{equation}
A similar bound over the odd terms holds as well:
\begin{equation}
\label{eqn:odd_piece}
\sum_{\substack{3\leq j\leq d \\ \textup{$j$ odd}}}2^{-j}\frac{(d-j)!}{(d-1)!}c_{d,n,j}^2\ll_C d^{9}(1+\sqrt{1+16C^2})^d \Delta(n+d)^6.
\end{equation}
We combine \eqref{eqn:even_piece} and \eqref{eqn:odd_piece} with the bound for $\Delta(n+d)$ in Theorem \ref{AsymNewThm} to conclude that there is a constant $\alpha_C>0$ such that \eqref{explicit Turan criterion} holds if $n\geq \alpha_C d^{13/3}(1+\sqrt{1+16C^2})^{d/3}$.  Per Corollary \ref{cor:chasse_improved}, we may assume that $d\geq 9.36\cdot 10^{20}$.  
We choose  $C=1+10^{-5}$, in which case there exists a constant $c>0$ such that $\alpha_C d^{13/3}(1+\sqrt{1+16C^2})^{d/3}\leq ce^{d/2}$, as desired.
\end{proof}

\section{Proof of Theorem \ref{AsymNewThm}}
\label{thm2.1proof}

Define
 \begin{equation*}
 \label{FnDefn}
 F(z) :=  \int_1^\infty (\log t)^z\,t^{-3/4}\,\Big(\sum_{k=1}^\infty e^{-\pi k^2 t}\Big)\,dt,
 \end{equation*}
which is holomorphic for $\re(z)>0$.  It follows from \cite[Equation 13]{GORZ} that
\begin{equation}\label{gamma}
\gamma(M)
=\frac{M!}{(2M)!} \cdot\,\frac{32 \binom {2M}2F(2M-2) - F(2M)}{2^{2M-1}}.
\end{equation}
If we replace the binomial coefficient and factorials in (\ref{gamma}) with $\Gamma$-functions, we see that (\ref{gamma}) extends to a function of a {\it complex} variable $M$ which is holomorphic for $\re(M)>1$.

For $M>0$, let  $L_M$ be the unique positive solution of the equation $M=L_M(\pi e^{L_M}+\frac{3}{4})$.  It is straightforward to show that $L_M\sim \log(\frac{M}{\log M})$.  Define $K_M=(L_M^{-1}+L_M^{-2})M-\frac{3}{4}$.  The function $L_M$ (and therefore $K_M$) extends to a function which is holomorphic and non-vanishing for $\re(M)>1$.  By \cite[Equation 16]{GORZ}, we have
{\small\begin{equation}\label{GammaExpression}
\gamma(M)=
\frac{e^{M-2}M^{M+\frac{1}{2}} L_{2M-2}^{2M-2}}{2^{2M-5}{(2M-2)}^{(2M-2)+\frac{1}{2}}}
 \sqrt{\frac{2\pi}{K_{2M-2}}}
\exp\Big(\frac{L_{2M-2}}{4} -\frac{2M-2}{L_{2M-2}}+\frac{3}{4} \Big) \Big(1+O_{\varepsilon}\Big(\frac{1}{M^{1-\varepsilon}}\Big)\Big).
\end{equation}}
Ultimately, the analytic continuation of $L_M$ and Stirling's formula imply that even when $M$ is complex, we may keep the existing error term in \eqref{GammaExpression} once we replace $M$ with $|M|$.

For fixed $\re(M)>1$, there is a function $R_M(j)$ of a {\it complex} variable $j$, holomorphic and non-vanishing for $|j|<\re(M)-1$, with the property that if $j,M\in\Z$ satisfy $|j|<M$, then
\begin{equation}\label{Rgamma}
R_M(j)=\gamma(M-j)/\gamma(M).
\end{equation}
Since $R_M(j)$ is holomorphic and nonvanishing when $|j|<\re(M)-1$, we have the expansion
\begin{equation}\label{AsymTemp}
\log R_M(j)=\sum_{m=1}^\infty a_m(M) j^m,\qquad |j|<\re(M)-1.
\end{equation}
By varying $M$, we find the Taylor coefficents $a_m(M)$ are in fact holomorphic functions in $M$.

Since $\log R_M(j)$ is holomorphic for $M$ and $j$ in the specified domains, the right hand side of \eqref{AsymTemp} converges absolutely and uniformly for $j$ and $M$ in \emph{compact} subsets of their respective domains. We wish to give bounds on the coefficients $a_m(M)$ which are uniform for \emph{all} real $M$ and $j$ in their respective domains. To do so, we must regularize $\log R_M(j)$ to obtain a function $R^*_{M}(\lambda)$ which extends to a function of $M$ on the extended interval $[3,\infty]$. For convenience, we replace $j$ with $\lambda(M-2)$; it suffices to consider $\lambda$ in the closed disk $|\lambda|\leq 1$ (rather than $j$ in a domain that varies with $M$).  We now define our regularized function
\begin{equation}
\label{RMod}
R_{M}^*(\lambda):=\frac{1}{M-2}\log\Big(\Big(\frac{eL_{2M-2}^2 M}{4(2M-2)^2}\Big)^{\lambda(M-2)}R_M(\lambda(M-2))\Big) +(1-\lambda)\log(1-\lambda).
\end{equation}
Our expansion for $R_{M}^*(\lambda)$ for $|\lambda|\leq 1$ naturally incorporates the coefficients $a_m(M)$:
\begin{equation}
\label{RModAsym}
R_{M}^*(\lambda) = \Big(a_1(M) +\log\Big(\frac{eL_{2M-2}^2 M}{4(2M-2)^2}\Big)-1\Big)\lambda  + \sum_{m=2}^\infty \Big(a_m(M) (M-2)^{m-1} +\frac{1}{m(m-1)}\Big)\lambda^m.
\end{equation}
\begin{lemma}
\label{lem:R_star}
The function $R_{M}^*(\lambda)$ is holomorphic for all $|\lambda|\leq 1$ and all $M$ in the extended interval $[3,\infty]$.  Moreover, for all $|\lambda|\leq 1$, we have that  $\lim_{M\to \infty}R_{M}^*(\lambda)=0$
\end{lemma}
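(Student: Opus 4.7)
The plan is to prove the lemma in two steps: first the holomorphy of $R_M^*(\lambda)$ on the closed unit disk for each fixed $M$, and then the pointwise limit as $M\to\infty$.

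For finite $M \in [3,\infty)$, since $R_M(j)$ is holomorphic and nonvanishing on $|j|<\re(M)-1$ by the construction stated earlier in the excerpt, substituting $j=\lambda(M-2)$ shows that $R_M(\lambda(M-2))$ is holomorphic and nonvanishing on $|\lambda|<(M-1)/(M-2)$, which is a neighborhood of the closed unit disk. Choosing the principal branch of $\log$ normalized so that $\log R_M(0)=0$, both $\lambda\log(eL_{2M-2}^2 M/(4(2M-2)^2))$ and $\frac{1}{M-2}\log R_M(\lambda(M-2))$ are holomorphic there. The additive correction $(1-\lambda)\log(1-\lambda)$ is holomorphic on $\C\setminus[1,\infty)$ and extends continuously to $0$ at $\lambda=1$; combined with the convergent expansion \eqref{RModAsym} at $\lambda=0$, this yields the holomorphy/convergence claim on $|\lambda|\leq 1$.

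For the limit, I would set $j=\lambda(M-2)$ and feed Stirling's formula \eqref{GammaExpression} into
\[
\frac{1}{M-2}\log R_M(\lambda(M-2))=\frac{\log\gamma(M-j)-\log\gamma(M)}{M-2},
\]
then collect terms. The key identity is $(M-j+1/2)\log(M-j)-(M+1/2)\log M=-j\log M+(M-j+1/2)\log(1-j/M)$. With $1-j/M=(1-\lambda)+2\lambda/M$, writing $\log((1-\lambda)+2\lambda/M)=\log(1-\lambda)+\log(1+2\lambda/(M(1-\lambda)))$ and expanding produces a leading $M(1-\lambda)\log(1-\lambda)$. A parallel expansion of the $(2M-2j-3/2)\log(2(M-j-1))$ term gives $2M(1-\lambda)\log(1-\lambda)$. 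Subtracting these and dividing by $M-2$ yields $-(1-\lambda)\log(1-\lambda)+o(1)$, precisely canceling the additive $(1-\lambda)\log(1-\lambda)$ in $R_M^*$. The remaining $\lambda$-linear smooth pieces—those involving $\log M$, $\log 2$, $\log(M-1)$, and $\log L_{2M-2}$—cancel exactly against the prefactor contribution $j\cdot\log(eL_{2M-2}^2 M/(4(2M-2)^2))$ by design.

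The main obstacle will be controlling the residual contribution from the Lambert-type function $L_N$, specifically $2(M-j-1)\log(L_{2M-2j-2}/L_{2M-2})$. Using $L_N=\log N-\log\log N+o(1)$, one obtains $L_{2M-2j-2}-L_{2M-2}=\log(1-j/(M-1))+o(1)$, so $\log(L_{2M-2j-2}/L_{2M-2})=\log(1-\lambda)/\log M+O((\log M)^{-2})$. Thus this contribution is of size $(1-\lambda)\log(1-\lambda)/\log M$ after dividing by $M-2$, vanishing uniformly for $|\lambda|\leq 1$ bounded away from $1$. The $O(M^{-1+\epsilon})$ error in Stirling contributes $O(M^{-2+\epsilon})$ and vanishes trivially. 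Finally, continuity at $\lambda=1$, where $(1-\lambda)\log(1-\lambda)$ is continuously zero, extends the uniform limit to the closed disk.
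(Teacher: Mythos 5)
Your proposal follows the same route as the paper: for each finite $M$, holomorphy on $|\lambda|\le 1$ comes from the substitution $j=\lambda(M-2)$ mapping into the domain of $R_M$, and the limit $R_M^*(\lambda)\to 0$ is obtained by feeding the Stirling-type asymptotic \eqref{GammaExpression} into $\frac{1}{M-2}\log R_M(\lambda(M-2))$, identifying the dominant $-M(1-\lambda)\log(1-\lambda)$ piece that cancels the regularizing term, and showing the Lambert-type $L$- and $K$-contributions (the paper's $B_M(\lambda)$ and $C_M(\lambda)$) vanish in the limit. Your bookkeeping simply regroups the terms rather than introducing the named pieces $A_M,B_M,C_M$.

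One step you should tighten is the final sentence about $\lambda=1$. Your expansion relies on writing $\log((1-\lambda)+2\lambda/M)=\log(1-\lambda)+\log\!\big(1+\tfrac{2\lambda}{M(1-\lambda)}\big)$, which degenerates when $1-\lambda$ is not bounded away from $0$, and you only assert the $L$-term vanishes ``uniformly for $|\lambda|\le 1$ bounded away from $1$.'' Invoking ``continuity at $\lambda=1$ of the limit function'' does not by itself propagate the pointwise limit to the boundary point; continuity of the limit is compatible with the sequence failing to converge there. The paper's simplified form
\[
A_M(\lambda)=\frac{M-j-2}{M-2}\log\!\Big(1-\frac{j}{M}\Big)-\frac{2M-2-2j+\tfrac12}{M-2}\log\!\Big(1-\frac{2j}{2M-2}\Big)
\]
sidesteps this because at $j=M-2$ the prefactor $\frac{M-j-2}{M-2}$ vanishes, absorbing the $\log(2/M)$ singularity. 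In your formulation you should either compute $R_M^*(1)$ directly (it reduces to a quantity of size $O(\log\log M/M)$, which tends to $0$) or establish equicontinuity of the family $\{R_M^*\}$ near $\lambda=1$ before taking limits. With that repair, the argument matches the paper's proof.
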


\begin{proof}

For all finite $M$ and $|\lambda|\leq 1$, the function $R_{M}^*(\lambda)$ is holomorphic since each such point corresponds to a value of $R_M(j)$ with $|j|\leq M-2$. In order to understand the behavior of $R_{M}^*(\lambda)$ as $M\to\infty$, we consider the regularized limit
\begin{equation}\label{RLim}
\lim_{M\to \infty}\frac{1}{M-2}\log\Big(\Big(\frac{eL_{2M-2}^2M}{4(2M-2)^2}\Big)^{\lambda(M-2)}R(\lambda(M-2);M)\Big).
\end{equation}
Let $j=\lambda(M-2)$, as above.  The asymptotic \eqref{GammaExpression} implies that as $M\to\infty$, we have
\[
\frac{1}{M-2}\log\Big(\Big(\frac{eL_{2M-2}^2 M}{4(2M-2)^2}\Big)^{j}R_M(j)\Big)= 
A+B+C+O_{\varepsilon}\Big(\frac{\log M}{M}\Big),
\]
where
\begin{align*}
	A_{M}(\lambda)&= \frac1{M-2}\log\Big( \frac{M^j(M-j)^{M-j-2}(2M-2)^{2M-2+\frac12}}{(2M-2)^{2j}({2M-2-2j})^{{2M-2-2j}+\frac{1}{2}}M^{M-2}}\Big),
\\
B_{M}(\lambda)&=\frac{1}{M-2}\Big((2M-2-2j)\log\Big(\frac{{L_{{2M-2-2j}}}}{L_{2M-2}}\Big) -\frac{1}{2}\log\Big(\frac{{K_{{2M-2-2j}}}}{K_{2M-2}}\Big)\Big),\quad\textup{and}
\\
C_{M}(\lambda)&=\frac{1}{M-2}\Big( \frac{{L_{{2M-2-2j}}}}{4} -\frac{{2M-2-2j}}{{L_{{2M-2-2j}}}}-\frac{L_{2M-2}}{4} +\frac{2M-2}{L_{2M-2}}\Big).
\end{align*}
Since $L_M\sim\log(\frac{M}{\log M})$, a calculus exercise shows that $\lim_{M\to\infty}B_M(\lambda)=\lim_{M\to\infty}C_{M}(\lambda)=0$.

Simplifying $A_M(\lambda)$, we find that
\[
A_M(\lambda)=\frac{M-j-2}{M-2}\log \Big(1-\frac{j}{M}\Big)-\frac{{2M-2-2j}+\tfrac12}{M-2}\log\Big(1-\frac{2j}{2M-2}\Big).
\]
Since $j=\lambda(M-2)$, it follows that
\begin{equation}
\label{eqn:michael_limit_above}
\lim_{M\to \infty}A_M(\lambda)=-(1-\lambda)\log(1-\lambda)=
\lambda-\sum_{m=2}^{\infty}\frac{1}{m(m-1)}\lambda^m.
\end{equation}
The rightmost sum converges absolutely for $|\lambda|< 1$, but is not holomorphic at $1$, hence we remove the term in \eqref{RMod} so that \eqref{RModAsym} converges on the boundary of the disk.
\end{proof}

Since $R_{M}^*(\lambda)$ is holomorphic for $|\lambda|\leq 1$ and all $M\in [3,\infty]$, the Taylor series given in \eqref{RModAsym} converges absolutely and uniformly for all such $\lambda$ and $M$. Taking $\lambda=1$ and $M\geq 3$, we find that for all $\epsilon>0$, there exists an integer $W_{\epsilon}\geq1$, {\it depending only on $\epsilon$}, such that
\begin{equation}
\label{amAsym}
|a_m(M)(M-2)^{m-1}+(m(m-1))^{-1}|<\epsilon\qquad\textup{whenever $m\geq W_{\epsilon}$.}
\end{equation}

\begin{proof}[Proof of Theorem \ref{AsymNewThm}]

To prove our claimed asymptotic for $\Delta(M)$, we write $\Delta(M)$ in terms of $a_m(M)$.  We extend $\Delta(M)$, originally defined in \eqref{delta}, to a holomorphic function by the identity
\begin{equation}
\label{Delta_to_R}
\Delta(M)= \sqrt{\frac{1}{2}\Big(1-\frac{R_M(2)}{R_M(1)^2}\Big)}.
\end{equation}
We use \eqref{AsymTemp} to expand \eqref{Delta_to_R} and then apply \eqref{amAsym} to bound $a_m(M)$ for $m\geq 4$, thus obtaining
\begin{equation}\label{DeltaExp}
\Delta(M)= \sqrt{-a_2(M)-3a_3(M)-a_2(M)^2+O(M^{-3})}.
\end{equation}
The asymptotic $\Delta(M)\sim \frac{1}{\sqrt{2M}}$ now follows from (\ref{amAsym}) as we let $\epsilon\to 0$.

We define $G_m(M)$ by the identity $-a_m(M)=G_m(M)\Delta(M)^{2m-2}$.  The expansion \eqref{AsymNew} now has the desired properties, and the claimed bounds and asymptotics for $G_m(M)$ follow from \eqref{amAsym} and the fact that $\Delta(M)\sim\frac{1}{\sqrt{2M}}$.  To recover (\ref{G2}), we square both sides of (\ref{DeltaExp}), and notice that $G_2(M)$ satisfies the quadratic equation 
 \[
 \Delta(M)^2 G_2(M)^2-G_2(M)+1-3G_3(M)\Delta(M)^2=O(\Delta(M)^{4}).
 \]
 The desired result follows.
\end{proof}

\begin{remark}
These methods provide an effective alternative to the approach to asymptotics for $g_m(n)$ and $\delta(n)$ in \cite{GORZ}.  Greater care is required here than in \cite{GORZ} because of the uniformity required in Theorem \ref{Thm1}.  Comparing (\ref{AsymOld}) and (\ref{AsymNew}), and noticing the sign change of $j$ on the left hand side both equations, we see that $G_m(M)\Delta(M)^{2m-2}\sim (-1)^{m+1}g_m(M)$.  In particular, we see that $g_2(M)\sim -1/(2M)$, which implies that $\delta(M)\sim \Delta(M)$.
\end{remark}

\section{Proof of Theorem \ref{Precision}}
\label{sec:proof_Precision}

Using the functions $G_m(M)$ given by Theorem \ref{AsymNewThm}, we define $S(j;M)$ and $Q_m(M)$ as follows:
\begin{align}\label{S_exp}
S(j;M)=\frac{R_M(j)}{R_M(1)^j}=\exp\Big(\sum_{m=2}^\infty G_m(M)\Delta(M)^{2m-2} (j-j^m)\Big)=\sum_{m=0}^\infty Q_m(M)j^m.
\end{align}
This definition of $S(j;M)$ is critical because, by \eqref{Rgamma}, we have for integers $j\in[0,M-1]$ that
\[
S(j;M)=\frac{\gamma(M-j)\gamma(M)^{j-1}}{\gamma(M-1)^j}.
\]
Using (\ref{Jhat}), we may rewrite the coefficients $A_{d,k}(n)$ as
\begin{equation}\label{A1}
A_{d,k}(n)= {d\choose k} \Delta(n+d)^{-k}\sum_{j=0}^k (-1)^{k-j} {k\choose j} S(j;n+d).
\end{equation}
Recall \eqref{eqn:differencing_operator}, and define $y_{m,k}=\sigma_{k,x}(x^m)$.  This leads to the identity
\begin{equation}\label{A2}
A_{d,k}(n)= {d\choose k} \Delta(n+d)^{-k}\sum_{m=0}^\infty y_{m,k}Q_m(n+d).
\end{equation}
We have the following lemma about the size of the $y_{m,k}$.

\begin{lemma}\label{smk}
Let $y_{m,k}$ be defined as above. Then $y_{m,k}=0$ if $m<k$, and
\[
y_{k,k}=k!, \quad y_{k+1,k}=k!{k+1\choose 2}, \quad y_{k+2,k}=k!{k+2\choose 3}\frac{3k+1}{4},\quad y_{k+3,k}=k!{k+3\choose 4}\frac{k^2+k}{2}.
\]
More generally, for all $i\geq 1$, there exists a polynomial $P_{i}(k)$ of degree $i-1$, satisfying $P_i(1)=1$ and $P_i(k)\leq k^{i-1}$ for all positive integers $k$, such that $y_{k+i,k}=k!{k+i\choose 1+i} P_{i}(k)$.
\end{lemma}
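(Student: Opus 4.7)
My plan is to exploit the classical identity $y_{m,k}=k!\,S(m,k)$, where $S(m,k)$ is the Stirling number of the second kind; this holds because $\sigma_{k,x}$ is the $k$-th forward-difference operator evaluated at $0$. The vanishing $y_{m,k}=0$ for $m<k$ and the value $y_{k,k}=k!$ are then immediate. The closed forms for $i=1,2,3$ reduce to evaluating $S(k+i,k)$, which I would do by direct enumeration of set partitions of $[k+i]$ into $k$ nonempty blocks, stratified by block-size multisets (for $i=2$: one $3$-block or two $2$-blocks; for $i=3$: one $4$-block, or one $3$-block and one $2$-block, or three $2$-blocks).

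For the general assertion, I would pass to the exponential generating function
\[
\sum_{m\ge 0} y_{m,k}\,\frac{t^m}{m!}\=(e^t-1)^k\=t^k g(t)^k,\qquad g(t):=\frac{e^t-1}{t},
\]
which gives $y_{k+i,k}=(k+i)!\,[t^i]\,g(t)^k$. Because $g(t)=\int_0^1 e^{ut}\,du$, writing $S_k:=U_1+\cdots+U_k$ for i.i.d.\ uniform $U_j$ on $[0,1]$ yields
\[
y_{k+i,k}\=\frac{(k+i)!}{i!}\,\E[S_k^i].
\]
Expanding $\E[S_k^i]$ by the multinomial theorem and grouping monomials by the multiset of exponents shows that $\E[S_k^i]$ is a polynomial in $k$ of degree $i$ whose every term is a falling-factorial multiple $k(k-1)\cdots(k-\ell+1)$ with $\ell\ge 1$, hence divisible by $k$. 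Since $k!\binom{k+i}{i+1}=k(k+i)!/(i+1)!$, the quotient $P_i(k)=(i+1)\E[S_k^i]/k$ is a polynomial of degree $i-1$, and $P_i(1)=(i+1)\E[U_1^i]=1$.

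The remaining bound $P_i(k)\le k^{i-1}$ is the crux and is what I expect to be the main obstacle: crude estimates (bounding $S_k$ by $k$, or summing over partition types with trivial control) lose a factor of $i+1$, which is insufficient. My plan is to recast the bound as the moment inequality $\E[(S_k/k)^i]\le 1/(i+1)=\E[U_1^i]$ and prove it by conditional Jensen. By exchangeability of the $U_j$, one has $\E[U_1\mid S_k]=S_k/k$, so for the convex function $\phi(x)=x^i$ on $[0,1]$,
\[
\phi(S_k/k)\=\phi\bigl(\E[U_1\mid S_k]\bigr)\;\le\;\E[\phi(U_1)\mid S_k].
\]
Taking expectations yields $\E[(S_k/k)^i]\le\E[U_1^i]$, whence $P_i(k)=(i+1)k^{i-1}\E[(S_k/k)^i]\le k^{i-1}$, with equality at $k=1$. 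This probabilistic viewpoint---recognizing $S_k/k$ as a conditional contraction of a single uniform---is what secures the sharp constant $1$ that the combinatorial bounds fail to reach.
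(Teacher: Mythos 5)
Your proposal is correct, and it takes a genuinely different route from the paper's. The paper establishes that $y_{k+i,k}/(k+i)!$ is a polynomial in $k$ of degree at most $i$ by applying the $t$-th difference operator \emph{in $k$} and computing the generating function $\sum_i \sigma_{t,k}\big(\tfrac{y_{k+i,k}}{(k+i)!}\big)X^{i+t}=(e^X-X-1)^t$; you instead read off the same fact from the multinomial expansion of $\E[S_k^i]$ into falling factorials of $k$, which is cleaner. The real divergence is in the bound $P_i(k)\leq k^{i-1}$. The paper recasts it as the coefficientwise domination $\big(\tfrac{e^x-1}{x}\big)^k\prec\tfrac{e^{kx}-1}{kx}$ and proves this by writing $\tfrac{e^x-1}{x}=e^{x/2}F_1(x)$ with $F_k(x)=(e^{kx/2}-e^{-kx/2})/(kx)$ and running an induction $F_1^k\prec F_k$ whose inductive step $F_1F_k\prec F_{k+1}$ reduces to the elementary inequality $\tfrac{4ik}{(k+1)^2}+\big(\tfrac{k-1}{k+1}\big)^{2i}\geq 1$. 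Your observation is that this coefficientwise domination is \emph{exactly} the family of moment inequalities $\E[S_k^i]\leq \E[(kU_1)^i]$, and that these all follow in one stroke from conditional Jensen applied to $\E[U_1\mid S_k]=S_k/k$. This is a more conceptual argument that explains why the constant is sharp (equality at $k=1$ comes from the degenerate conditioning), and it avoids the somewhat unmotivated intermediate sequence $F_k$. What the paper's approach buys in exchange is that it is entirely formal and self-contained at the level of power series coefficients, with no appeal to probability or measurability of conditional expectations. One minor point you should still address: you assert $P_i$ has degree $i-1$; to confirm this you should note the leading coefficient, e.g.\ from the $\ell=i$ stratum (all distinct indices) one gets $\E[S_k^i]=2^{-i}k^i+O(k^{i-1})$, so $P_i(k)=(i+1)2^{-i}k^{i-1}+O(k^{i-2})$ with nonzero leading coefficient.
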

\begin{proof}
If $m<k$, the identity $y_{m,k}=0$ follows the discussion following \eqref{eqn:differencing_operator}.  For $m\geq k$, we have the identity $(e^X-1)^k = ( X + \frac{X^2}{2} + \frac{X^3}{3!} + \dots )^k = \sum_{m=0}^\infty \frac{y_{m,k}}{m!}X^m$.  For integers $t>i$, we now consider $\sigma_{t,k}(\frac{y_{k+i,k}}{(k+i)!})$ as a function of $k$. For fixed $t$, we have the generating function 
\begin{eqnarray*}
\sum_{i=0}^{\infty} \sigma_{t,k}\Big(\frac{y_{k+i,k}}{(k+i)!}\Big)X^{i+t} 
&=& \sum_{i=0}^{\infty} \sum_{j=0}^{t}(-1)^{t-j}{t\choose j}\Big(\frac{y_{j+i,j}}{(j+i)!}\Big)X^{i+t} \\
&=& \sum_{j=0}^{t} (-1)^{t-j}{t\choose j}X^{t-j} \sum_{i=0}^{\infty}\Big(\frac{y_{j+i,j}}{(j+i)!}\Big) X^{j+i}\\
&=& \sum_{j=0}^{t} (-1)^{t-j}{t\choose j}X^{t-j} (e^X-1)^j= (e^X-X-1)^t = \frac{1}{2^{t}}X^{2t}+\cdots.
\end{eqnarray*}
Hence $\sigma_{t,k}(\frac{y_{k+i,k}}{(k+i)!})=0$ for $t>i$.  This implies that $\frac{y_{k+i,k}}{(k+i)!}$ is a polynomial in $k$ of degree at most $i$.  For $i\geq 1$, note that $y_{i,0}=0$, and $y_{i,1}=1$.  Thus, we can factor $y_{k+i,k}$ as
\[
y_{k+i,k}=k P_{i}(k)\prod_{j=0}^{i-1}\frac{k+i-j}{1+i-j}=k!{k+i\choose i+1} P_{i}(k),
\]
where $P_{i}(1)=1$. A short calculation gives the claimed expressions for $y_{k+1,k}$, $y_{k+2,k}$, and $y_{k+3,k}$.

We prove that $P_i(k)\leq k^{i-1}$ for all positive integers $k$ by comparing the Taylor coefficients of 
\begin{equation}
\label{eqn:expansions_k}
\Big(\frac{e^{X}-1}{X}\Big)^k=\sum_{i=0}^\infty\frac{k\cdot P_i(k)}{(i+1)!}X^{i}\qquad\text{and}\qquad 
\frac{e^{kX}-1}{kX}=\sum_{i=0}^{\infty}\frac{k^{i}}{(i+1)!}X^{i}.
\end{equation}
Given functions $f=f(x)$ and $g=g(x)$ which are analytic at $0$, let $f \prec g$ denote the condition that $f^{(i)}(0)\leq g^{(i)}(0)$ for all integers $i\geq 0$.  In other words, the $i$-th Taylor coefficient of $g$ is at least the $i$-th Taylor coefficient of $f$ in the expansions at zero.   This statement has transitivity---if $f\prec g$ and $g\prec h$, then $f\prec h$.  If $h^{(i)}(0)\geq 0$ for all $i\geq 0$, then $f\prec g$ implies $fh\prec gh$.

By comparing the expansions in \eqref{eqn:expansions_k}, the bound $P_i(k)\leq k^{i-1}$ is equivalent to
\begin{equation}
\label{eqn:step_1}
\Big(\frac{e^x-1}{x}\Big)^k\prec \frac{e^{kx}-1}{kx}.
\end{equation}
Define $F_k=F_k(x):=(e^{kx/2}-e^{-kx/2})/(kx)$.  We rewrite \eqref{eqn:step_1} as $e^{kx/2}F_1^k\prec e^{kx/2}F_k$.  Since $(e^{kx/2})^{(n)}(0)>0$ for all $n\geq 0$, $e^{kx/2}F_1^k\prec e^{kx/2}F_k$ follows from $F_1^k\prec F_k$.

We will prove $F_1^k\prec F_k$ by induction on $k$.  The result when $k=1$ is trivial.  Suppose now that $F_1^k\prec F_k$ is true for some integer $k\geq 1$.  By transitivity, the truth of $F_1^{k+1}\prec F_{k+1}$ follows from that of $F_1^{k+1}\prec F_1 F_k$ and $F_1 F_k\prec F_{k+1}$.  Since $F_k^{(i)}(0)\geq 0$ for all $i\geq 0$, our inductive hypothesis $F_1^k\prec F_k$ implies $F_1^{k+1}\prec F_1 F_k$.

It remains to prove $F_1 F_k\prec F_{k+1}$ for all $k\geq 1$.  We have the expansions
{\small\[
F_1(x) F_k(x)=\sum_{i=1}^{\infty}\frac{2}{k(2i)!}\Big(\Big(\frac{k+1}{2}\Big)^{2i}-\Big(\frac{k-1}{2}\Big)^{2i}\Big)x^{2i-2},\quad F_{k+1}(x)=\sum_{i=1}^{\infty}\frac{1}{(2i-1)!}\Big(\frac{k+1}{2}\Big)^{2i-2}x^{2i-2}.
\]}
Thus $F_1 F_k\prec F_{k+1}$, and hence \eqref{eqn:step_1}, follows from the bound
\begin{equation}
\label{eqn:step_3}
0\leq \frac{4ik}{(k+1)^{2}}+\Big(\frac{k-1}{k+1}\Big)^{2i}-1,\qquad i,k\geq 1.
\end{equation}
Denote the right side of \eqref{eqn:step_3} as $\omega_k(i)$.  Observe that $\omega_k(i+1)-\omega_k(i) = \frac{4k}{(k+1)^2}(1-(\frac{k-1}{k+1})^{2i})>0$  for all $i,k\geq 1$.  It follows that $\omega_k(i)\geq \omega_k(1)=0$ for all $i,k\geq 1$, which proves \eqref{eqn:step_3}.
\end{proof}

The desired asymptotic for $A_{d,k}(n)$ will follow from a suitable bound for $Q_m(M)$, which we prove using Theorem \ref{AsymNewThm} and Lemma \ref{smk}.

\begin{lemma}\label{bound Q_m}
If $m,\ell\geq 1$ are integers, $C>1$, and $M \ge \max(\ell^3, M_C)$, then $|Q_m(M)|\ell! \ll_C (4C)^m \ell^{\ell - \frac{1}{2}m}\Delta(M)^m$.
\end{lemma}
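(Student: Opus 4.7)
The approach is Cauchy's integral formula applied to the holomorphic function $S(\,\cdot\,;M)$. Since the $Q_m(M)$ are its Taylor coefficients at $0$, for any $r$ in the disk of convergence we have
\[
|Q_m(M)|\le r^{-m}\max_{|j|=r}|S(j;M)|.
\]
We take $r:=\sqrt{\ell}/(4C\Delta(M))$, so that $r^{-m}=(4C\Delta(M)/\sqrt{\ell})^m$ already produces the correct $m$-dependence. It then remains to prove $|S(j;M)|\ll_C e^{\ell}$ on this circle, since Stirling's formula $\ell^\ell/\ell!\asymp e^\ell/\sqrt{\ell}$ will convert such a bound into the claimed inequality $|Q_m(M)|\ell!\ll_C(4C)^m\ell^{\ell-m/2}\Delta(M)^m$.

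To estimate $|S(j;M)|$ on the contour, we begin from \eqref{S_exp}:
\[
\log|S(j;M)|\le\sum_{k\ge 2}|G_k(M)|\,\Delta(M)^{2k-2}(|j|+|j|^k).
\]
Inserting the uniform bound $|G_k(M)|\ll_C(2C)^k$ from Theorem~\ref{AsymNewThm}(2), the two resulting series are essentially geometric. For the $r^k$ sum, the common ratio is $2C\Delta(M)^2 r=\Delta(M)\sqrt{\ell}/2$, which the hypothesis $M\ge\ell^3$ together with Theorem~\ref{AsymNewThm}(1) forces to be at most $1/(2\ell)$; this sum is therefore $\ll_C\ell$. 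The linear-in-$r$ piece contributes $r\sum_{k\ge 2}(2C)^k\Delta(M)^{2k-2}\ll_C\sqrt{\ell}\Delta(M)\ll_C 1$ under the same constraints. We conclude that $\log|S(j;M)|\le A_C\ell+O_C(1)$ on $|j|=r$ for some $A_C>0$ depending only on $C$.

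Combining Cauchy with Stirling then yields
\[
|Q_m(M)|\ell!\ll_C e^{(A_C-1)\ell}\sqrt{\ell}\,(4C)^m\ell^{\ell-m/2}\Delta(M)^m.
\]
The principal obstacle is ensuring $A_C<1$, so that the excess exponential factor is absorbed into the implicit $C$-dependent constant. We plan to achieve this by tightening the estimate on $|G_k(M)|$ using the additional $1/(k(k-1))$-decay implicit in the limit $G_k(M)\to 2^{k-1}/(k(k-1))$ from Theorem~\ref{AsymNewThm}(2); this sharpens the geometric-series tail and keeps the dominant $k=2$ contribution at $|G_2(M)|\ell/(16C^2)$, which is well below $\ell$ for $C$ near $1$. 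Should this refinement fall short, we can alternatively enlarge the Cauchy radius to $\sqrt{\ell}/(\beta_C\Delta(M))$ with $\beta_C>4C$ chosen to force the analogous constant below $1$, and then absorb the ensuing $(\beta_C/(4C))^m$ penalty into the $\ll_C$-constant using the remaining Stirling slack and the freedom in the parameter $\ell$.
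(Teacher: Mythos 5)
Your Cauchy-integral route is a genuinely different approach from the paper's: the published proof expands $Q_m(M)$ via the multinomial theorem as a sum over partitions $\lambda\vdash m$ and then bounds each term directly using $|G_i(M)|\ll_C(2C)^i$, the constraint $\Delta(M)\leq\ell^{-3/2}$, and the crude count of at most $2^m$ partitions. Your approach avoids partition combinatorics entirely and instead reads the bound off a single contour integral, which is conceptually cleaner. The radius choice $r=\sqrt{\ell}/(4C\Delta(M))$ is inside the domain of holomorphy of $S(\,\cdot\,;M)$ since $M\geq\ell^3$ gives $r\ll M^{1/2}\ell^{1/2}\ll M-1$, the common-ratio estimate $\sqrt{\ell}\,\Delta(M)/2\leq 1/(2\ell)$ is correct, and the Stirling bookkeeping $\ell^\ell/\ell!\asymp e^\ell/\sqrt{\ell}$ is exactly what is needed. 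You also correctly isolate the genuine crux: the circle bound must be $\log\max_{|j|=r}|S(j;M)|\leq A_C\ell+O_C(1)$ with $A_C<1$ strictly, not merely $A_C\leq 1$.

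Of your two proposed fixes, only the first is viable. For the $k=2$ term (the dominant one), the right move is not the crude $|G_2(M)|\ll_C(2C)^2$ but the fact that $G_2(M)\to 1$ as $M\to\infty$ (Theorem \ref{AsymNewThm}(2)) and $G_2$ is continuous, so $|G_2(M)|$ is uniformly bounded by, say, $2$ once $M\geq M_C$; this puts the $k=2$ contribution at roughly $\ell/(8C^2)<\ell/8$, and the $k\geq 3$ tail is $O_C(1)$ by the $\frac{1}{2\ell}$-geometric decay, giving $A_C\approx 1/(8C^2)<1$ with ample room. Your second fallback, however, does not work: enlarging the radius to $\sqrt{\ell}/(\beta_C\Delta(M))$ with $\beta_C>4C$ introduces a factor $(\beta_C/(4C))^m$ in $r^{-m}$, and since the implied constant in $\ll_C$ cannot depend on $m$ or $\ell$, this penalty cannot be absorbed --- take $\ell$ fixed and $m\to\infty$ to see the Stirling slack $e^{(A_C-1)\ell}\sqrt{\ell}$ is a fixed constant while the penalty is unbounded. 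You should drop the second alternative and rely on the first, which closes the gap cleanly.
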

\begin{proof}
Let $\lambda$ be a partition of $m$, denoted $\lambda\vdash m$.  Let $\lambda_i$ be the number of parts equal to $i$ so that $\sum_{i=1}^m i\lambda_i=m$.  Define $\mathcal{L}(\lambda)=\sum_{i=1}^m\lambda_i$.  From (\ref{S_exp}) and the multinomial theorem, we obtain
\begin{align}\label{partition sum}
\frac{Q_m(M)\ell!}{\Delta(M)^m} &= \frac{\ell!}{\Delta(M)^m} \sum_{\lambda \vdash m} \frac{(\widetilde{G}_1(M) \Delta(M)^2)^{\lambda_1}}{\lambda_1!} \frac{(-G_2\Delta(M)^2)^{\lambda_2}}{\lambda_2!}\cdots \frac{(-G_m\Delta(M)^{2m-2})^{\lambda_m}}{\lambda_m!}\nonumber\\
&= \sum_{\lambda\vdash m} (-1)^{\mathcal{L}(\lambda)-\lambda_1} \frac{\ell!}{\lambda_1!\lambda_2!\cdots \lambda_m!} \widetilde{G}_1(M)^{\lambda_1}G_2(M)^{\lambda_2}\cdots G_m(M)^{\lambda_m} \Delta(M)^{m-2\mathcal{L}(\lambda)+2\lambda_1},
\end{align}
where $\widetilde{G}_1(M):=\sum_{m=2}^{\infty} G_{m}(M) \Delta(M)^{2m-4}$.  Since $|G_i(M)|\ll_C(2C)^i$ by Theorem \ref{AsymNewThm}, it follows that $\widetilde{G}_1(M) = 1+O_C(\Delta(M)^2)$ and $|\widetilde{G}_1(M)^{\lambda_1}G_2(M)^{\lambda_2}\cdots G_m(M)^{\lambda_m}|\ll_C (2C)^{\sum_{i=1}^m i\lambda_i}=(2C)^{m}$.  Since $\Delta(M) \le M^{-\frac{1}{2}}\le \ell^{-\frac{3}{2}}$ by Theorem \ref{AsymNewThm} and our hypotheses, the definition of $\mathcal{L}(\lambda)$ yields
\[
\frac{\ell!}{\lambda_1!\lambda_2!\cdots \lambda_m!} \Delta(M)^{m-2\mathcal{L}(\lambda)+2\lambda_1} \le \frac{\ell!}{\lambda_2!} \ell^{-\frac{3}{2}(m-2\mathcal{L}(\lambda)+2\lambda_1)} \le \ell^{\ell - \lambda_2 - \frac{3}{2}(m-2\mathcal{L}(\lambda)+2\lambda_1)}\leq \ell^{\ell - \frac{1}{2}m}.
\]
The desired result now follows since there are at most $2^m$ partitions of $m$.
\end{proof}

\begin{proof}[Proof of Theorem \ref{Precision}]
Recall \eqref{A2}, which expresses $A_{d,k}(n)$ as a sum of $y_{m,k}Q_m(n+d)$ over $m\geq 0$.  We use Lemma \ref{smk} to rewrite the contribution from $y_{m,k}$ in \eqref{A2} and arrive at
\begin{multline*}
A_{d,k}(n)={d\choose k}k!\Big[\frac{Q_{k}(n+d)}{\Delta(n+d)^k}+{k+1\choose 2}\frac{Q_{k+1}(n+d)}{\Delta(n+d)^k}+{k+2\choose 3}\frac{3k+1}{4}\frac{Q_{k+2}(n+d)}{\Delta(n+d)^k}\\
+{k+3\choose 4}\frac{k^2+k}{2}\frac{Q_{k+3}(n+d)}{\Delta(n+d)^k}+\sum_{i=4}^{\infty}{k+i\choose 1+i}\frac{P_i(k)Q_{k+i}(n+d)}{\Delta(n+d)^k}\Big].
\end{multline*}
Let $j=\lfloor k/2\rfloor$.  Since ${d\choose k}k!=\frac{d!}{(d-k)!}$, it follows that $A_{d,k}(n)$ equals
\begin{multline*}
	\frac{(-1)^{j}d!}{j!(d-k)!}\Big[\frac{(-1)^{j}j! Q_{k}(n+d)}{\Delta(n+d)^k}+{k+1\choose 2}\frac{(-1)^{j}j!Q_{k+1}(n+d)}{\Delta(n+d)^k}\\
	+{k+2\choose 3}\frac{3k+1}{4}\frac{(-1)^{j}j!Q_{k+2}(n+d)}{\Delta(n+d)^k}+{k+3\choose 4}\frac{k^2+k}{2}\frac{(-1)^{j}j!Q_{k+3}(n+d)}{\Delta(n+d)^k}\\
	+\sum_{i=4}^{\infty}{k+i\choose 1+i}P_i(k)\frac{(-1)^{j}j!Q_{k+i}(n+d)}{\Delta(n+d)^k}\Big].
\end{multline*}
Suppose that $n+d>\max\{j^3,M_C,64C^2j\}$.  The asymptotic bounds for $\Delta(n+d)$ from Theorem \ref{AsymNewThm} and the bound for $Q_{k+i}(n+d)$ in Lemma \ref{bound Q_m} imply that $A_{d,k}(n)$ equals
\begin{equation}
\label{eqn:main_suM_22}
	\begin{aligned}
		\frac{(-1)^{j}d!}{j!(d-k)!}&\Big[\frac{(-1)^{j}j! Q_{k}(n+d)}{\Delta(n+d)^k}+{k+1\choose 2}\frac{(-1)^{j}j!Q_{k+1}(n+d)}{\Delta(n+d)^k} \\
		&+{k+2\choose 3}\frac{3k+1}{4}\frac{(-1)^{j}j!Q_{k+2}(n+d)}{\Delta(n+d)^k}+{k+3\choose 4}\frac{k^2+k}{2}\frac{(-1)^{j}j!Q_{k+3}(n+d)}{\Delta(n+d)^k}\\
		&\hspace{9cm}+O((4C)^{k}k^{9/2}\Delta(n+d)^4)\Big].
	\end{aligned}
\end{equation}

Let $m\in\{k,k+1,k+2,k+3\}$.  As in Lemma \ref{bound Q_m}, we use \eqref{partition sum} to expand $Q_{m}(n+d)$, bounding the contribution from the partitions $\lambda$ such that $m-2\mathcal{L}(\lambda)+2\lambda_1\geq 3$ using the bound for $|G_m(n+d)|$ in Theorem \ref{AsymNewThm}.  Since $m-2\mathcal{L}(\lambda)+2\lambda_1 = \lambda_1+\sum_{i=3}^{m}(i-2)\lambda_i$, we must separately consider the cases where $m$ is even (where the powers of $\Delta(n+d)$ are even) and $m$ is odd (where the powers of $\Delta(n+d)$ are odd).  When $M=n+d$ and $m$ is even, it then follows from \eqref{partition sum} that $Q_{m}(M)$ equals $(-1)^{m/2} \Delta(M)^m / (\frac{m}{2})!$ times
\begin{equation}
\label{eqn:m_even}
	\begin{aligned}
		&G_2(M)^{\frac{m}{2}} - \frac{m}{4}\Big( G_2(M)^{\frac{m}{2}-1} \widetilde{G}_1(M)^2 + (m -2) G_4(M) G_2(M)^{\frac{m}{2} - 2} \\
		&+(m-2) G_3(M) G_2(M)^{\frac{m}{2} - 2}\widetilde{G}_1(M)+ \frac{(m-2)(m-4)}{4}G_3(M)^2G_2(M)^{\frac{m}{2} - 3}\Big)\Delta(M)^2\\
		&+O_C(m^6 (4C)^m \Delta(M)^{4}).	
	\end{aligned}
\end{equation}
Similarly, when $m$ is odd, $Q_m(M)$ equals $(-1)^{\lfloor\frac{m}{2}\rfloor} \Delta(M)^m/ (\lfloor\frac{m}{2}\rfloor)!$ times
\begin{equation}\label{eqn:m_odd}
(G_2(M)^{\lfloor\frac{m}{2}\rfloor} \tilde{G}_1(M) + \lfloor\tfrac{m}{2}\rfloor G_3(M) G_2(M)^{\lfloor\frac{m}{2}\rfloor - 1})\Delta(M) + O_C(m^4 (4C)^m \Delta(M)^3).
\end{equation}
The theorem follows by substituting \eqref{eqn:m_even} and \eqref{eqn:m_odd} into \eqref{eqn:main_suM_22}.
\end{proof}

\section{Proof of Theorem \ref{Thm2}}
\label{sec:proof_Chasse}

We introduce some notation.  For $0<\delta< \pi/2$, define $S(\theta,\delta):=\{z\in\mathbb{C}^{\times}\colon |\arg(z)-\theta|\leq \delta\}$.  Let $C(\theta,\delta)$ to be the set of entire functions $F$ such that there exist a sequence of complex numbers $(\beta_k)_{k\geq 1}$, an integer $q\geq 0$, and constants $c,\sigma\in\mathbb{C}$ such that $ \sum_{k=1}^{\infty}\frac{1}{|\beta_k|}<\infty$, $\beta_k,\sigma\in S(\theta,\delta)$, and
 \[
 F(z)= cz^{q}e^{-\sigma z}\prod_{k=1}^{\infty}\Big(1-\frac{z}{\beta_k}\Big).
 \]

\begin{lemma}\label{Chasse1}
Let $0<\delta< \pi/2$.  If $F\in C(\theta,\delta)$, then $F$ is locally uniformly approximated by polynomials, each of whose zeros lie in $S(\theta,\delta)$, and conversely.  Moreover, if $m\geq 1$ is an integer and the $m$-th derivative $F^{(m)}$ is not identically zero, then $F^{(m)}\in C(\theta,\delta)$.
\end{lemma}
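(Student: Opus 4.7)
The plan is to read Lemma \ref{Chasse1} as a sectorial analogue of the classical Laguerre--P\'olya theorem: part (1) is the workhorse, and part (2) falls out of part (1) combined with Gauss--Lucas, since the hypothesis $0 < \delta < \pi/2$ makes $S(\theta,\delta) \cup \{0\}$ a convex cone.

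\emph{Forward direction of (1).} Given $F(z) = c z^{q} e^{-\sigma z}\prod_{k}(1 - z/\beta_{k}) \in C(\theta,\delta)$, I would form the double sequence of polynomials
\[
P_{N,n}(z) := c z^{q} \Big(1 - \frac{\sigma z}{n}\Big)^{n} \prod_{k=1}^{N}\Big(1 - \frac{z}{\beta_{k}}\Big),
\]
verifying by the Weierstrass $M$-test (using $\sum 1/|\beta_{k}| < \infty$) that the partial products converge locally uniformly to the infinite product and that $(1 - \sigma z/n)^{n} \to e^{-\sigma z}$ locally uniformly. A diagonal extraction $P_{N(n),n}$ produces polynomials converging locally uniformly to $F$. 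The zeros $\beta_{1},\ldots,\beta_{N}$ lie in $S(\theta,\delta)$ by hypothesis; to ensure the zero $n/\sigma$ of $(1-\sigma z/n)^{n}$ also lies in $S(\theta,\delta)$, one chooses the half-plane/sector approximation appropriately adapted to $\sigma$ (in the spirit of the classical half-plane case treated in Boas, \emph{Entire Functions}, and Chasse's thesis).

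\emph{Converse of (1).} Assume $F = \lim P_{n}$ locally uniformly, with every $P_{n}$ having its zeros in $S(\theta,\delta)$. By Hurwitz's theorem, the zeros of $F$ are limits of zeros of the $P_{n}$ and therefore lie in $\overline{S(\theta,\delta)} = S(\theta,\delta)\cup\{0\}$. Uniform Cauchy estimates on $\log P_{n}$ transfer the summability of $\sum 1/|\beta_{k}^{(n)}|$ to the limit, so Hadamard's factorization theorem produces the desired representation $F(z) = c z^{q} e^{-\sigma z}\prod_{k}(1-z/\beta_{k})$. Finally, a normal-families argument applied to the residual factor $P_{n}(z) \big/ \big(c z^{q}\prod_{k}(1 - z/\beta_{k}^{(n)})\big)$, together with the closedness of $S(\theta,\delta)$, pins $\sigma$ inside the sector.

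\emph{Part (2).} Approximate $F$ locally uniformly by polynomials $P_{n}$ with zeros in $S(\theta,\delta)\cup\{0\}$ using part (1). Since $0 < \delta < \pi/2$, this set is a convex cone with apex at the origin, so the Gauss--Lucas theorem places every zero of $P_{n}'$ in the same cone. Weierstrass's theorem guarantees $P_{n}' \to F'$ locally uniformly, and, provided $F' \not\equiv 0$, the converse of (1) then gives $F' \in C(\theta,\delta)$. Iterating $m$ times yields $F^{(m)} \in C(\theta,\delta)$ whenever $F^{(m)} \not\equiv 0$.

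The main obstacle is the converse of (1): recovering the full Hadamard-type factorization, and in particular showing that the exponential exponent $\sigma$ sits inside the prescribed sector, from only the datum of a local uniform limit of sectorially-zeroed polynomials. The forward direction of (1) and the derivative preservation (part (2)) are then classical corollaries once this biconditional is established.
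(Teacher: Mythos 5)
The paper disposes of part~(1) in one sentence by citing Levin, \emph{Distribution of Zeros of Entire Functions}, Chapter~VIII, and only argues part~(2); for part~(2) your argument is essentially identical to the paper's: approximate $F$ by polynomials with zeros in the sector, apply Gauss--Lucas (using that $S(\theta,\delta)\cup\{0\}$ is convex because $\delta<\pi/2$), pass to the limit via Weierstrass and the converse of part~(1), then induct on $m$.

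Where you part ways with the paper is that you attempt to sketch the biconditional in part~(1) rather than cite it, and that sketch has a genuine gap in the converse direction. You write that ``uniform Cauchy estimates on $\log P_n$ transfer the summability of $\sum 1/|\beta_k^{(n)}|$ to the limit,'' but this is not a complete argument: a locally uniform limit of polynomials need not have finite order, let alone genus at most one, without additional input. The input that makes the converse go through is precisely the sector hypothesis $\delta<\pi/2$: because all zeros lie in a closed half-plane, one can bound the growth of $|P_n|$ along the ray $\arg z = \theta+\pi$ from below and thereby control the counting function of zeros and the order of the limit, which is what licenses Hadamard factorization with genus $\le 1$. Your subsequent normal-families argument for pinning $\sigma\in S(\theta,\delta)$ is similarly asserted rather than derived. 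None of this is fatal --- these are exactly the points Levin's Chapter~VIII addresses --- but as written the converse of part~(1) is not self-contained, and since part~(2) relies on that converse, the proof as a whole is incomplete unless you either cite Levin as the paper does or fill in the growth estimates.
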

\begin{proof}
The first claim is proved in \cite[Chapter VIII]{Levin}. For the second claim, suppose that $F\in C(\theta,\delta)$ is non-constant.  By the first claim, there exists a sequence of nonzero polynomials $(g_n)$ which locally uniformly approximate $F$, and each zero of $g_n$ lies in $S(\theta,\delta)$.  By the Gauss-Lucas theorem, the zeros of $g_n'$ belong to the convex hull of the set of zeros of $g_n$; thus each zero of $g_n'$ lies in $S(\theta,\delta)$.  Since the sequence $(g_n')$ locally uniformly approximates $F'$, it follows by the first claim that $F'\in C(\theta,\delta)$.  For higher derivatives, we proceed by induction.
\end{proof}
\begin{lemma}
\label{Chasse2}
	If $\frac{d^n}{dz^n}\psi(\sqrt{z})\in C(\pi,\delta)$, then $J^{d,n}(X)$ is hyperbolic for $d\leq|\sin(\delta)|^{-2}$.
\end{lemma}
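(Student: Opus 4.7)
My plan is to reduce the statement to one about polynomials with all zeros in $S(\pi,\delta)$, and then to prove that polynomial statement by induction on the number of non-real conjugate zero pairs, using a direct identity for how Jensen polynomials transform under multiplication of $p$ by a real quadratic factor.

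First, write $F(z):=\frac{d^n}{dz^n}\psi(\sqrt{z})$. Expanding $\psi$ via \eqref{psi} yields $F(z)=\sum_{k\geq 0}\gamma(n+k)z^k/k!$, so
\[
J^{d,n}(X)=\sum_{k=0}^{d}\binom{d}{k}F^{(k)}(0)X^k.
\]
Since $F\in C(\pi,\delta)$ by hypothesis, Lemma~\ref{Chasse1} supplies polynomials $p_m\to F$ locally uniformly on $\mathbb{C}$, with every zero of every $p_m$ lying in $S(\pi,\delta)$. Local uniform convergence implies $p_m^{(k)}(0)\to F^{(k)}(0)=\gamma(n+k)$ for each $k$, so $J_{p_m}^d(X):=\sum_{k=0}^d\binom{d}{k}p_m^{(k)}(0)X^k$ converges to $J^{d,n}(X)$ coefficient-wise. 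Since $\gamma(n+d)>0$ and the set of hyperbolic degree-$d$ polynomials is closed under coefficient-wise convergence, it suffices to prove that each $J_{p_m}^d(X)$ is hyperbolic whenever $d\leq|\sin\delta|^{-2}$.

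Because $F$ has real Taylor coefficients, I may take each $p_m$ to be real and factor it into real linear factors (with roots in $(-\infty,0]$) and real quadratic factors of the form $z^2+2\alpha z+(\alpha^2+\beta^2)$ with $\alpha>0$ and $\beta^2/(\alpha^2+\beta^2)\leq\sin^2\delta$. I then argue by induction on the number of quadratic factors. The base case, in which $p_m$ has only real nonpositive zeros, is classical P\'olya: $J_p^d$ is hyperbolic for every $d$. For the inductive step, writing $p=(z^2+2\alpha z+\alpha^2+\beta^2)q$, a direct Leibniz expansion together with $k\binom{d}{k}=d\binom{d-1}{k-1}$ and $k(k-1)\binom{d}{k}=d(d-1)\binom{d-2}{k-2}$ yields the key identity
\[
J_p^d(X)=(\alpha^2+\beta^2)\,J_q^d(X)+2\alpha d\,X\,J_q^{d-1}(X)+d(d-1)\,X^2\,J_q^{d-2}(X).
\]
In the toy case $\deg q=0$ (so $J_q^{d}\equiv J_q^{d-1}\equiv J_q^{d-2}\equiv 1$), this reduces to $d(d-1)X^2+2\alpha d X+(\alpha^2+\beta^2)$, whose discriminant $4d[(\alpha^2+\beta^2)-d\beta^2]$ is nonnegative iff $d\sin^2(\arg(-z_0))\leq 1$, showing that the bound $d\leq|\sin\delta|^{-2}$ is sharp and dictates the discriminant condition used throughout the induction.

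For the inductive step with general $q$, one needs the three Jensen polynomials $J_q^{d-2},J_q^{d-1},J_q^d$ (hyperbolic by the inductive hypothesis applied at degrees $d-2,d-1,d$, all bounded by $|\sin\delta|^{-2}$) to interlace sufficiently that the quadratic-in-$X$ combination above remains hyperbolic under the bound $d\beta^2\leq\alpha^2+\beta^2$. This is a Hermite-Kakeya-Obreschkoff (HKO) type mechanism: appropriate interlacing of consecutive Jensen polynomials makes certain real linear combinations and quadratic-in-$X$ pencils hyperbolic. The main obstacle is propagating the requisite HKO interlacing from $q$ to $p$ under our sectorial hypothesis; this is precisely the argument carried out by Chasse~\cite{Chasse} in the case $m=0$, and it generalizes verbatim to $m\geq 1$ because the only input is the sectorial location of the zeros of $\frac{d^n}{dz^n}\psi(\sqrt{z})$, which is uniformly controlled via Lemma~\ref{Chasse1}.
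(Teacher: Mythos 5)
The paper's proof is a two-line citation: it observes that $\frac{d^n}{dz^n}\psi(\sqrt z)=\sum_{j\ge 0}\gamma(n+j)z^j/j!$ has positive Taylor coefficients and then invokes Chasse's Theorem 3.6 directly with $\varphi=\frac{d^n}{dz^n}\psi(\sqrt z)$; there is no polynomial approximation and no induction. Your approach is genuinely different in structure, and the pieces you do carry out are correct: the Leibniz identity $J_p^d(X)=(\alpha^2+\beta^2)J_q^d(X)+2\alpha dXJ_q^{d-1}(X)+d(d-1)X^2J_q^{d-2}(X)$ is right, the reduction to polynomials via Lemma \ref{Chasse1} and coefficient-wise closedness is sound (given $\gamma(n+d)>0$), and the $\deg q=0$ computation giving discriminant $4d[(\alpha^2+\beta^2)-d\beta^2]$ is a nice illustration of where the threshold $d\le|\sin\delta|^{-2}$ comes from.

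However, the inductive step is a genuine gap, not a detail. Hyperbolicity of $J_q^{d-2}$, $J_q^{d-1}$, $J_q^d$ (which is all the inductive hypothesis supplies) does \emph{not} imply that the quadratic-in-$X$ pencil $(\alpha^2+\beta^2)J_q^d+2\alpha dXJ_q^{d-1}+d(d-1)X^2J_q^{d-2}$ is hyperbolic; one needs a strong common-interlacing/HKO structure among consecutive Jensen polynomials, and you have not shown that this structure holds for $q$ (whose zeros live in a sector, not on the real line) nor that it propagates through the recursion. You flag this as ``the main obstacle'' and then assert that ``this is precisely the argument carried out by Chasse,'' but that is circular in your framing: your stated goal was to supply the polynomial-level argument, and the crux of it is precisely the part you defer to Chasse. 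Moreover it is not clear that Chasse's Theorem 3.6 is proved by an induction on quadratic factors at all --- his result is stated for entire functions in $C(\pi,\delta)$ and there is no $m=0$ versus $m\ge 1$ distinction to ``generalize verbatim''; the paper simply applies it with a new choice of $\varphi$. Either carry out the interlacing argument explicitly (showing the needed HKO relations among $J_q^{d-2},J_q^{d-1},J_q^d$ under the sector hypothesis), or do what the paper does and cite Chasse's theorem directly, but do not do both halves of each.
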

\begin{proof}
	In \eqref{psi}, all powers of $z$ are even, so $\psi(\sqrt{z})$ is entire. Since $\gamma(j)>0$ for all $j\geq 0$ and
	\[
	\frac{d^n}{dz^n}\psi(\sqrt{z}) = \sum_{j=0}^{\infty}\frac{\gamma(j+n)}{j!}z^j,
	\]
	the Taylor coefficients of $\frac{d^n}{dz^n}\psi(\sqrt{z})$ are positive.  Hence the lemma follows immediately from \cite[Theorem 3.6]{Chasse} with $\varphi=\frac{d^n}{dz^n}\psi(\sqrt{z})$.
\end{proof}

\begin{proof}[Proof of Theorem \ref{Thm2}]
We follow \cite{Chasse}.  Let $m\geq 0$ be an integer.  Suppose that $\mathrm{RH}_m(T)$ holds for some $T>\frac{1}{2}$.  Then the zeros of $\frac{d^m}{dz^m}\psi(z)$ in the rectangle $\{z\in\C\colon |\re(z)|< 1/2,~|\im(z)|\leq T\}$ are imaginary.  Therefore, the zeros of $\frac{d^m}{dz^m}\psi(\sqrt{z})$ must lie in $S(0,2\arctan(\frac{1}{2T}))\cup S(\pi,2\arctan(\frac{1}{2T}))$.  Since $\gamma(j)>0$ for all $j\geq 0$, the zeros of $\frac{d^m}{dz^m}\psi(\sqrt{z})$ lie in the half-plane $\re(z)<0$, and hence must lie in $S(\pi,2\arctan(\frac{1}{2T}))$.  Hence $\frac{d^m}{dz^m}\psi(\sqrt{z})\in C(\pi,2\arctan(\frac{1}{2T}))$.  We see from Lemma \ref{Chasse2} that $J^{d,m}(X)$ is hyperbolic for $d\leq \lfloor |\sin(2\arctan(\tfrac{1}{2T}))|^{-2}\rfloor = \lfloor T^2+\tfrac{1}{2}+\tfrac{1}{16T^2}\rfloor$.  Thus if $d\leq \lfloor T\rfloor^2$, then $J^{d,m}(X)$ is hyperbolic.  Since $C(\theta,\delta)$ is closed under differentiation per Lemma \ref{Chasse1}, we have $\frac{d^n}{dz^n}\psi(\sqrt{z})\in C(\pi,2\arctan(\frac{1}{2T}))$ for all $n\geq m$.  This finishes the proof.
\end{proof}

\end{document}